\documentclass[final,reqno,twoside,a4paper,11pt]{amsart}

\usepackage{mltools}
\usepackage{verbatim}
\usepackage{mlmath62,mlthm10-REAR,mllocal}
\usepackage{upref,amsmath,amssymb,amsthm,mathrsfs}
\usepackage{tikz,graphics,latexsym}
\usepackage{times}
\usepackage{amsmath}
\usepackage{amssymb}
\usepackage{amsthm}
\usepackage{amsmath}
\usepackage{amsfonts}
\usepackage{amssymb}
\usepackage[all]{xy}
\usepackage{xcolor}
\usepackage[utf8]{inputenc}

\usepackage[colorlinks=true,linkcolor=blue,citecolor=blue]{hyperref}



\linespread{1.05}
\usepackage[scaled]{helvet} 
\usepackage{courier} 
\usepackage[mathbf]{euler}
\usepackage{mllocal} 

\usepackage{pgfplots}

\setcounter{tocdepth}{1}
\numberwithin{equation}{section}

\begin{document}
\title {\bf The geometrical information encoded by the Euler obstruction of a map}  

\vspace{1cm}
\author{Nivaldo G. Grulha Jr., Camila Ruiz and Hellen Santana}

\date{\bf }
\maketitle
\begin{center}
{	{\small \textit{Dedicated to Jean-Paul Brasselet on the occasion of his 75th birthday.}}}
\end{center}

\begin{abstract}

In this work we investigate the topological information captured by the Euler obstruction of a map, $f:(X,0)\to (\mathbb{C}^{2},0)$, with $(X,0)$ a germ of a complex $d$-equidimensional singular space, with $d > 2$, and its relation with the local Euler obstruction of the coordinate functions and, consequently, with the Brasselet number. Moreover, under some technical  conditions on the departure variety we relate the Chern number of a special collection related to the map-germ $f$ at the origin with the number of cusps of a generic perturbation of $f$ on a stabilization of $(X,f)$.

\end{abstract}

\section*{Introduction}

The local Euler obstruction is an invariant defined by MacPherson in \cite{MacPherson} as one of the main ingredients in his proof of the Deligne-Grothendieck conjecture about the existence of Chern classes for singular varieties. The local Euler obstruction of $X$ at the origin, where $X$ is a sufficiently small representative of the equidimensional analytic germ $(X,0)$, is denoted by ${Eu}_{X}(0)$. 

After MacPherson's pioneer work, the local Euler obstruction has been studied by many authors. In \cite{LT}, L\^e and Teissier showed that the Euler obstruction is equal to an alternated sum of multiplicities of generic polar varieties of $X$ at $0$. In \cite{BLS}, Brasselet, L\^{e} and Seade proved a Lefschetz type formula for the local Euler obstruction: they related this invariant with the topology of the Milnor fibre on $X$ of a generic linear function. 

In \cite{BMPS}, Brasselet, Massey, Parameswaran and Seade defined a new invariant associated to a function $f$ defined on the variety $X$, called the Euler obstruction of the function and denoted by ${Eu}_{f,X}(0)$. In \cite{STV}, Seade, Tib\u{a}r and Verjovsky showed that, up to sign, the Euler obstruction of a function is equal to the number of critical points of a Morsefication of $f$ lying on the regular part of $X$ which shows that this invariant can be seen as a generalization of the Milnor number (\cite{BMPS,STV}).

Based on the multiplicity formula of L\^e and Teissier, J.-P. Brasselet conjectured that a relative version of the Euler obstruction should satisfy a similar multiplicity formula. In  \cite{DG}, the authors define an invariant called the Brasselet number, denoted by ${B}_{f,X}(0)$, and proved that it satisfies a Lê-Greuel type formula, giving an answer to Brasselet's conjecture. 

When $f$ has isolated singularity, ${B}_{f,X}(0)= {Eu}_{X}(0)-{Eu}_{f,X}(0)$ and in the general case, ${B}_{f,X}(0)= {Eu}_{X}(0)-{D}_{f,X}(0)$, where ${D}_{f,X}(0)$ is the defect defined and studied in Section 6 of \cite{BMPS}. Such defect is related to the L\^e numbers (Proposition 6.1 in \cite{BMPS}), when $X$ is smooth, and to the L\^e-Vogel numbers (Theorem 6.2 in \cite{BMPS}) in the general case. Therefore, considering these relations and the result of Seade, Tib\u{a}r and Verjovsky mentioned above, the defect ${D}_{f,X}(0)$ appears to be a good singular version of the Milnor number, whereas the Brasselet number appears to be a good relative version of the Euler obstruction. 

The Euler obstruction of a map, defined by Grulha \cite{Grulha2008}, and the Chern number of collections of forms, defined by Ebeling and Gusein-Zade \cite{Ebeling2007a}, were defined in the first decade of the 21st century and, in \cite{brasselet2010euler}, Brasselet, Grulha and Ruas related these two invariants. The Euler obstruction of a map is a generalization of the Euler obstruction of a function, defined in \cite{BMPS}.


The  Chern number of collections of forms has a foudantion similar to the Euler obstruction of maps; however it has traveled a different trajectory ``avoiding'' cellular decompotitions, which are, in some sense, covered by the study of the loci of collections of forms.  That is another way to present a generalization of the local Euler obstruction and, in \cite{GaffneyGrulhaJofSing}, Gaffney and Grulha present an algebraic treatment to study the Chern number.


Since the Euler obstruction of a function-germ $g$ at the origin gives important topological information about $g$, more precisely, it counts the number of Morse points on the regular part of $X$ of a generic perturbation of $g$, a natural question has risen: which kind of topological information could be encoded inside the Euler obstruction of a map and how this invariant is related with the Euler obstruction of the coordinate functions of $f$.


 In this paper we relate all these invariants and present geometrical  information that are captured by the Euler obstruction of a map, and by the Chern number, in the context of an application $f: (X,0) \to (\mathbb{C}^{2},0).$  

\begin{center}\textbf{Acknowledgements}\end{center}

The authors are grateful to professor Brasselet, to whom they dedicate this article on the occasion of his 75th birthday. The first author started his research on the local Euler obstruction and its generalizations during his Ph.D. in the Universit\'{e} de la M\'{e}diterran\'{e}e (currently called Aix-Marseille Universit\'{e})  under the advisory of professor Jean-Paul Brasselet. Some questions answered in this work are strong related and raised from inicial questions proposed by Professor Brasselet to the first author during his Ph.D or elaborated by the doctoral jury during his thesis defense in 2007.

The authors also thank professors Dutertre, Gaffney, Ruas and Seade for all productive conversations about Euler obstruction, GSV-index and related topics for their important remarks and suggestions that were crucial to the progress of this work. We also thank professors Ebeling and Zach for the fruitful conversations during the Thematic Program on Singularity Theory, at IMPA, Rio de Janeiro, Brazil, 2020, which were essential in the development of this paper. 

The first author was supported by FAPESP, under grant 2019/21181-02, and by CNPq, grant 303046/2016-3. The second author was partially supported by DMA-UFTM. The third author was supported by FAPESP, grant 2015/25191-9. The authors also thank PROBAL (CAPES-DAAD), grant 88881.198862/2018- 01.

\section{Local Euler obstruction and and generalizations}

\hspace{0,5cm} In this section, we recall  the definition of the local Euler obstruction, the Euler obstruction of a function and the Brasselet number, both generalizations of the local Euler obstruction. The main references for this section are \cite{Ms1,MacPherson,DG, santana2019brasselet}.

Let $(X,0)\subset(\mathbb{C}^n,0)$ be an equidimensional reduced complex analytic germ of dimension $d$ in a open set $U\subset\mathbb{C}^n.$ Consider a complex analytic Whitney stratification $\{V_{\lambda}\}$ of $U$ adapted to $X$ such that $\{0\}$ is a stratum. We choose a small representative of $(X,0),$ denoted by $X,$ such that $0$ belongs to the closure of all strata. We write $X=\cup_{i=0}^{q} V_i,$ where $V_0=\{0\}$ and $V_q$ denotes the regular part $X_{reg}$ of $X.$ We suppose that $V_0,V_1,\ldots,V_{q-1}$ are connected and that the analytic sets $\overline{V_0},\overline{V_1},\ldots,\overline{V_q}$ are reduced. We write $d_i=dim(V_i), \ i\in\{1,\ldots,q\}.$ Note that $d_q=d.$ 

Let $G(d,n)$ be the Grassmannian manifold, $x\in X_{reg}$ and consider the Gauss map $\phi: X_{reg}\rightarrow U\times G(d,n)$ given by $x\mapsto(x,T_x(X_{reg})).$ 

\begin{definition}
The closure of the image of the Gauss map $\phi$ in $U\times G(d,n)$, denoted by $\tilde{X}$, is called \textbf{Nash modification} of $X$. It is a complex analytic space endowed with a holomorphic projection map $\nu:\tilde{X}\rightarrow X.$
\end{definition}

Consider the extension of the tautological bundle $\mathcal{T}$ over $U\times G(d,n).$ Since \linebreak$\tilde{X}\subset U\times G(d,n)$, we consider $\tilde{T}$ the restriction of $\mathcal{T}$ to $\tilde{X},$ called the \textbf{Nash bundle}, and $\pi:\tilde{T}\rightarrow\tilde{X}$ the projection of this bundle.

In this context, denoting by $\varphi$ the natural projection of $U\times G(d,n)$ at $U,$ we have the following diagram:

$$\xymatrix{
\tilde{T} \ar[d]_{\pi}\ar[r] & \mathcal{T}\ar[d] \\ 
\tilde{X}\ar[d]_{\nu}\ar[r] & U\times G(d,n)\ar[d]^{\varphi} \\ 
X\ar[r] & U\subseteq\mathbb{C}^n \\}  $$

Adding the stratum $U \setminus X$ we obtain a Whitney stratification of $U$. Let us denote the restriction to $X$ of the tangent bundle of $U$ by $TU|_{X}$. We know that a stratified vector field $v$ on $X$ means a continuous section of $TU|_{X}$ such that if $x \in V_{i} \cap 
 X$ then $v(x) \in T_{x}(V_{i})$. From Whitney's condition (a), one has the following lemma.

\begin{lemma} \cite{brasselet1981classes}.
	Every stratified vector field $v$, non-null on a subset $A \subset X$, has a canonical lifting to a non-null section $\tilde{v}$ of the Nash bundle $\widetilde {\mathcal{T}}$ over $\nu^{-1}(A) \subset \widetilde{X}$.
\end{lemma}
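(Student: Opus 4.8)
The plan is to realize the lifting concretely as the pullback $\nu^{*}v$ of the stratified vector field by the Nash modification, and then to verify, by means of Whitney's condition (a), that this pullback actually takes its values in the Nash bundle $\widetilde{\mathcal{T}}$ and not merely in $\nu^{*}(TU|_{X})$. Recall that the tautological bundle $\mathcal{T}$ over $U\times G(d,n)$ has fibre $\mathcal{T}_{(x,P)}=P$, regarded as a linear subspace of $\mathbb{C}^{n}=T_{x}U$; restricting to $\widetilde{X}$, the fibre of the Nash bundle over a point $(x,P)\in\widetilde{X}$ is $\widetilde{\mathcal{T}}_{(x,P)}=P\subseteq T_{x}U$, and since $\varphi|_{\widetilde{X}}$ factors through $\nu$ we have an inclusion of bundles $\widetilde{\mathcal{T}}\subseteq\nu^{*}(TU|_{X})$ over $\widetilde{X}$. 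One then sets $\widetilde{v}:=\nu^{*}v$, that is, $\widetilde{v}(x,P):=v(\nu(x,P))=v(x)$. This is automatically a continuous section of $\nu^{*}(TU|_{X})$ over $\nu^{-1}(A)$, and it is non-null there precisely because $v$ is non-null on $A$, so the whole statement reduces to checking that $v(x)\in P$ for every $(x,P)\in\widetilde{X}$ with $x\in A$.

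For $x\in V_{q}=X_{reg}$ this is immediate: the Gauss map $\phi$ is a biholomorphism onto $\nu^{-1}(X_{reg})$, so $P=T_{x}(X_{reg})$, and $v(x)\in T_{x}(X_{reg})=P$ by the very definition of a stratified vector field. Suppose then $x\in V_{i}$ with $i<q$. Because $\widetilde{X}$ is the closure of the image of $\phi\colon X_{reg}\to U\times G(d,n)$, there is a sequence $x_{n}\in X_{reg}$ with $x_{n}\to x$ in $X$ and $T_{x_{n}}(X_{reg})\to P$ in $G(d,n)$. Applying Whitney's condition (a) to the pair of strata $(X_{reg},V_{i})$ of the stratification $\{V_{\lambda}\}$ yields $T_{x}(V_{i})\subseteq P$, and since $v$ is stratified we get $v(x)\in T_{x}(V_{i})\subseteq P$, which is exactly what was needed. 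Hence $\widetilde{v}$ is a continuous, non-null section of $\widetilde{\mathcal{T}}$ over $\nu^{-1}(A)$; it is canonical in that it is built from $v$ with no auxiliary choice, and any continuous lift of $v$ must agree with $\widetilde{v}$ on the dense subset $\nu^{-1}(A\cap X_{reg})$, hence everywhere. This is the argument of Brasselet and Schwartz.

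The one genuinely delicate point is the use of Whitney's condition (a). One must know that the limiting $d$-plane $P$ appearing in a point of $\widetilde{X}$ over a singular stratum really is a limit of tangent planes to $X_{reg}$ along a sequence converging to $x$ — which holds here only because the Gauss map, and therefore the closure defining $\widetilde{X}$, involves $X_{reg}$ alone — and then that condition (a) for the given Whitney stratification delivers precisely the inclusion $T_{x}(V_{i})\subseteq P$ that makes room for the vector $v(x)$. Everything else is formal: continuity and non-vanishing of $\widetilde{v}$ are inherited from $v$ through the pullback.
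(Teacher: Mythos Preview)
Your argument is correct and is exactly the one the paper has in mind: the paper does not actually prove the lemma but merely records, immediately before its statement, that ``From Whitney's condition (a), one has the following lemma'' and cites \cite{brasselet1981classes}. Your proposal supplies precisely the details behind that remark --- the pullback $\widetilde v=\nu^{*}v$ together with the verification, via Whitney~(a), that limits of tangent planes to $X_{reg}$ contain the tangent space to the lower stratum --- so there is nothing to compare.
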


Now consider a stratified radial vector field $v(x)$ in a neighborhood of $\left\{0\right\}$ in $X$, {\textit{i.e.}}, there is $\varepsilon_{0}$ such that for every $0<\varepsilon \leq \varepsilon_{0}$, $v(x)$ is pointing outwards the ball ${B}_{\varepsilon}$ over the boundary ${S}_{\varepsilon} := \partial{{B}_{\varepsilon}}$.

The following interpretation of the local Euler obstruction has been given by Brasselet and Schwartz in \cite{brasselet1981classes}. As said before, the original definition is presented in \cite{MacPherson}.

\begin{definition}
	Let $v$ be a radial vector field on $X \cap {S}_{\varepsilon}$ and $\tilde{v}$ the lifting of $v$ on $\nu^{-1}(X \cap {S}_{\varepsilon})$ to a section of the Nash bundle. The \textbf{local Euler obstruction} (or simply the \textbf{Euler obstruction}) ${\rm Eu}_{X}(0)$ is defined to be the obstruction to extending $\tilde{v}$ as a nowhere zero section of $\widetilde {\mathcal{T}}$ over $\nu^{-1}(X \cap {B}_{\varepsilon})$.
\end{definition}

More precisely, let $\mathcal{O}{(\tilde{v})} \in \mathbb{H}^{2d}(\nu^{-1}(X \cap 
{B_{\varepsilon}}), \nu^{-1}(X \cap {S_{\varepsilon}}), \mathbb{Z})$ be the 
obstruction cocycle to extending $\tilde{v}$ as a nowhere zero section of 
$\widetilde {\mathcal{T}}$ inside $\nu^{-1}(X\cap {B_{\varepsilon}})$. The 
local Euler obstruction ${\rm Eu}_{X}(0)$ is defined as the evaluation of the 
cocycle $\mathcal{O}(\tilde{v})$ on the fundamental class of the pair $[\nu^{-1}(X 
\cap {B_{\varepsilon}}), \nu^{-1}(X \cap {S_{\varepsilon}})]$ and therefore 
it is an integer.

In \cite{BLS}, Brasselet, Lê and Seade proved a formula to make the calculation of the Euler obstruction easier.

\begin{theorem}(Theorem 3.1 of \cite{BLS})
Let $(X,0)$ and $\mathcal{V}$ be given as before, then for each generic linear form $l,$ there exists $\varepsilon_0$ such that for any $\varepsilon$ with $0<\varepsilon<\varepsilon_0$ and $\delta\neq0$ sufficiently small, the Euler obstruction of $(X,0)$ is equal to 

$$Eu_X(0)=\sum^{q}_{i=1}\chi(V_i\cap B_{\varepsilon}\cap l^{-1}(\delta)).Eu_{X}(V_i),$$

\noindent where $\chi$ is the Euler characteristic, $Eu_{X}(V_i)$ is the Euler obstruction of $X$ at a point of $V_i, \ i=1,\ldots,q$ and $0<|\delta|\ll\varepsilon\ll1.$
\end{theorem}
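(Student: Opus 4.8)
The plan is to realize $\mathrm{Eu}_X(0)$ as a sum of local obstruction indices of a single, carefully chosen stratified vector field on $X\cap B_\varepsilon$, and then to match the contribution of each stratum $V_i$ with the term $\chi(V_i\cap B_\varepsilon\cap l^{-1}(\delta))\cdot\mathrm{Eu}_X(V_i)$.

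First I would fix a generic linear form $l$ and put $\ell=l|_X$. Genericity, understood in the sense of the polar and hyperplane-section data of the Whitney stratified set $X$ (L\^e--Teissier), guarantees that after shrinking $\varepsilon$ the set $V_i\cap B_\varepsilon\cap\ell^{-1}(\delta)$ is the local Milnor fibre of $\ell|_{V_i}$, and that on each stratum $V_i$ one may choose a gradient-like stratified vector field for $\ell|_{V_i}$ having only non-degenerate (complex Morse) zeros inside $B_\varepsilon$. Using a partition of unity subordinate to the stratification and to a collar of $S_\varepsilon$, I would assemble a stratified vector field $\xi$ on $X\cap B_\varepsilon$ that is the radial field near $S_\varepsilon$ and coincides with the chosen gradient-like field for $\ell|_{V_i}$ on each $V_i$ away from the boundary; its zeros are then finitely many points, each lying in some $V_i$ and being a Morse zero of $\ell|_{V_i}$. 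By the lifting lemma (Whitney condition (a)), $\xi$ lifts canonically to a section $\tilde\xi$ of the Nash bundle over $\nu^{-1}(X\cap B_\varepsilon)$, nowhere zero off $\nu^{-1}$ of the zero set of $\xi$; by the Brasselet--Schwartz description of $\mathrm{Eu}_X(0)$ as an obstruction cocycle and its additivity, $\mathrm{Eu}_X(0)=\sum_{p}\mathrm{Ind}(\tilde\xi,p)$, summed over the zeros $p$ of $\xi$.

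The next step is the local computation at a zero $p\in V_i$. Whitney regularity provides a local product decomposition $(X,p)\cong(V_i,p)\times(N_i,p)$ with $N_i$ a normal slice, under which the Nash modification and the Nash bundle split as the product of the (trivial) Gauss data of the smooth factor $V_i$ with the Nash data of $N_i$. Hence $\mathrm{Ind}(\tilde\xi,p)$ factors as the Poincar\'e--Hopf index of the gradient-like field of $\ell|_{V_i}$ at the Morse point $p$ times the obstruction index over $\nu^{-1}(N_i\cap B_\varepsilon)$ of the lifted radial field, that is, times $\mathrm{Eu}_X(V_i)$, which is constant along $V_i$. It then remains to sum the Poincar\'e--Hopf indices of $\ell|_{V_i}$ over its Morse zeros in $B_\varepsilon$: this is a Lefschetz/Morse-theoretic count which, using the conic structure of $V_i\cap B_\varepsilon$ and the way the unique zero of the purely radial field at $0$ redistributes among the strata once the $\ell$-perturbation is switched on, equals $\chi(V_i\cap B_\varepsilon\cap\ell^{-1}(\delta))$ (the signs $(-1)^{d_i}$ attached to complex Morse points cancelling stratum by stratum). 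Summing over $i=1,\dots,q$, and noting that the stratum $\{0\}$ contributes nothing since its slice by $\ell^{-1}(\delta)$ is empty, yields the formula.

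The main obstacle is precisely this last bookkeeping: controlling how the index $\mathrm{Eu}_X(0)$ of the unperturbed radial field splits among the strata and checking that the signed Morse count on each $V_i$ collapses to the unsigned Euler characteristic of the slice. This forces one to use the conic structure of the strata, the vanishing of the relevant reduced cohomology of the complex links, and the compatibility of the partition of unity with the Whitney stratification, all simultaneously. A secondary technical point is making the product splitting of the Nash bundle over a point of a \emph{singular} stratum precise enough to justify the multiplicativity of the local index; this is where Whitney's condition (a) and the behaviour of the Gauss map along the strata do the real work.
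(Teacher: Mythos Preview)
The paper does not prove this statement: it is quoted verbatim as Theorem~3.1 of \cite{BLS} in the preliminaries section and used only as background. There is therefore no in-paper proof to compare your proposal against.

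For what it is worth, your sketch follows the same architecture as the original Brasselet--L\^e--Seade argument: replace the radial field by a stratified vector field that is radial near $S_\varepsilon$ and gradient-like for a generic linear form on each stratum, lift it to the Nash bundle, and use additivity of the obstruction to write $\mathrm{Eu}_X(0)$ as a sum of local indices, each of which factors through the normal slice as $\mathrm{Eu}_X(V_i)$ times a Poincar\'e--Hopf contribution on $V_i$. The point you flag as the ``main obstacle'' is exactly where the work lies in \cite{BLS}: one does not literally Morsify $\ell|_{V_i}$ and count signed zeros, but rather uses that for generic $l$ the restriction $\ell|_{V_i}$ has $0$ as its only stratified critical value, so that the Lefschetz-type comparison between the cone $V_i\cap B_\varepsilon$ and the slice $V_i\cap B_\varepsilon\cap\ell^{-1}(\delta)$ gives the Euler characteristic directly, without a separate sign cancellation step. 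Your phrase ``the signs $(-1)^{d_i}$ \ldots\ cancelling stratum by stratum'' is a bit loose in this respect; the cleaner route is to invoke the conic structure and the identification of the complex link, as in \cite{BLS}, rather than to manufacture Morse points on each $V_i$ and then argue their signs away.
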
 


Let us give the definition of another invariant introduced by Brasselet, Massey, Parameswaran and Seade in \cite{BMPS}. Let $f:X\rightarrow\mathbb{C}$ be a holomorphic function with isolated singularity at the origin given by the restriction of a holomorphic function $F:U\rightarrow\mathbb{C}$ and denote by $\overline{\nabla}F(x)$ the conjugate of the gradient vector field of $F$ at $x\in U,$ $$\overline{\nabla}F(x):=\left(\overline{\frac{\partial F}{\partial x_1}},\ldots, \overline{\frac{\partial F}{\partial x_n}}\right).$$

Since $f$ has an isolated singularity at the origin, for all $x\in X\setminus\{0\},$ the projection $\hat{\zeta}_i(x)$ of $\overline{\nabla}F(x)$ over $T_x(V_i(x))$ is nonzero, where $V_i(x)$ is a stratum containing $x.$ Using this projection, the authors constructed, in \cite{BMPS}, a stratified vector field on $X,$ denoted by $\overline{\nabla}f(x).$ Let $\tilde{\zeta}$ be the lifting of $\overline{\nabla}f(x)$ as a section of the Nash bundle $\tilde{T}$ over $\tilde{X}$, without singularity on $\nu^{-1}(X\cap S_{\varepsilon}).$

Let $\mathcal{O}(\tilde{\zeta})\in\mathbb{H}^{2n}(\nu^{-1}(X\cap B_{\varepsilon}),\nu^{-1}(X\cap S_{\varepsilon}))$ be the class of the obstruction cocycle for extending $\tilde{\zeta}$ as a non zero section of $\tilde{T}$ inside $\nu^{-1}(X\cap B_{\varepsilon}).$

\begin{definition}
The \textbf{local Euler obstruction of the function} $f, Eu_{f,X}(0)$ is the evaluation of $\mathcal{O}(\tilde{\zeta})$ on the fundamental class $[\nu^{-1}(X\cap B_{\varepsilon}),\nu^{-1}(X\cap S_{\varepsilon})].$
\end{definition}

For an equivalent definition of the Euler obstruction of a function using forms one can see  \cite{DutertreGrulhaJofSing}.

The next theorem compares the Euler obstruction of a space $X$ with the Euler obstruction of function defined on $X.$

\begin{theorem}\label{Euler obstruction of a function formula}(Theorem 3.1 of \cite{BMPS})
Let $(X,0)$ and $\mathcal{V}$ be given as before and let \linebreak$f:(X,0)\rightarrow(\mathbb{C},0)$ be a function with an isolated singularity at $0.$ For $0<|\delta|\ll\varepsilon\ll1,$ we have
 $$Eu_{f,X}(0)=Eu_X(0)-\sum_{i=1}^{q}\chi(V_i\cap B_{\varepsilon}\cap f^{-1}(\delta)).Eu_X(V_i).$$
\end{theorem}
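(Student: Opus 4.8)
The plan is to combine the two Lefschetz-type formulas already recorded in the excerpt. By the Brasselet–Lê–Seade theorem we may express $Eu_X(0)$ as $\sum_{i=1}^q \chi(V_i\cap B_\varepsilon\cap l^{-1}(\delta))\,Eu_X(V_i)$ for a generic linear form $l$. The claim to be proved asserts that the Euler obstruction of the function $f$ (with isolated singularity) measures exactly the difference between $Eu_X(0)$ and the analogous alternating-type sum in which the generic linear slice $l^{-1}(\delta)$ is replaced by the Milnor fibre $f^{-1}(\delta)$ of $f$. So the core of the argument is a comparison, stratum by stratum, of the obstruction to extending the lifted conjugate gradient field $\widetilde\zeta$ with the obstruction controlling $Eu_X(0)$ itself.

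The key steps, in order, would be as follows. First, recall the Brasselet–Schwartz description: $Eu_X(0)$ is the obstruction to extending the lift $\widetilde v$ of a radial vector field $v$ over $\nu^{-1}(X\cap B_\varepsilon)$, evaluated on the fundamental class. Second, on the regular part and on each stratum $V_i$, build a homotopy between the radial field $v$ and the conjugate gradient field $\overline\nabla f$; since $f$ has an isolated singularity at $0$, away from $0$ the projection $\widehat\zeta_i$ of $\overline\nabla F$ onto $T_x V_i$ is nonvanishing, so the two fields are sections of the same bundle that differ by a controlled amount. Third, localize the difference of obstruction classes near the Milnor fibre: the change in the obstruction cocycle when passing from $v$ to $\overline\nabla f$ is concentrated where $\overline\nabla f$ becomes radial-like, and by a Morsification / general position argument this contribution is computed stratum by stratum as $\chi(V_i\cap B_\varepsilon\cap f^{-1}(\delta))\cdot Eu_X(V_i)$. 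Fourth, assemble: $Eu_{f,X}(0)=\mathcal O(\widetilde\zeta)\big([\nu^{-1}(X\cap B_\varepsilon),\nu^{-1}(X\cap S_\varepsilon)]\big)$ equals $Eu_X(0)$ minus the sum of these stratified contributions, which is precisely the asserted formula.

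The main obstacle is the third step: making rigorous the claim that the defect between the two obstruction cocycles is localized along the Milnor fibre and evaluates, on each stratum, to the Euler characteristic of the slice $V_i\cap B_\varepsilon\cap f^{-1}(\delta)$ weighted by $Eu_X(V_i)$. This requires a careful choice of a stratified Morsification of $f$, control of how the conjugate gradient flow interacts with the Whitney stratification near $S_\varepsilon$, and the use of the relative Lefschetz-type splitting of the fundamental class $[\nu^{-1}(X\cap B_\varepsilon),\nu^{-1}(X\cap S_\varepsilon)]$ induced by the fibration of $f$ over a small disc. The counting of critical points on each $V_i$ and their contribution to the obstruction — essentially the content of the Seade–Tibăr–Verjovsky interpretation — is what converts the topological defect into the Euler-characteristic sum; patching this across strata while keeping everything compatible with the Nash lift is the delicate point.

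Once the localization is in place, the remainder is bookkeeping: one checks that for $0<|\delta|\ll\varepsilon\ll 1$ the relevant spaces $V_i\cap B_\varepsilon\cap f^{-1}(\delta)$ have well-defined Euler characteristic independent of the choices, that genericity of $l$ in the Brasselet–Lê–Seade formula is compatible with the fixed $f$, and that the signs work out so that $Eu_{f,X}(0)$ appears with the stated sign. This yields $Eu_{f,X}(0)=Eu_X(0)-\sum_{i=1}^q \chi\big(V_i\cap B_\varepsilon\cap f^{-1}(\delta)\big)\,Eu_X(V_i)$, as claimed.
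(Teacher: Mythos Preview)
The paper does not contain a proof of this statement: it is quoted verbatim as Theorem~3.1 of \cite{BMPS} and used as background, with no argument supplied. There is therefore nothing in the present paper to compare your proposal against.

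As for the proposal itself, what you have written is a plan rather than a proof. The overall strategy---compare the obstruction class of the lifted radial field with that of the lifted conjugate gradient $\widetilde\zeta$, and localize the difference along the Milnor fibre stratum by stratum---is indeed the shape of the argument in \cite{BMPS}. But you explicitly flag the heart of the matter (your ``third step'') as an obstacle and do not carry it out: you do not construct the homotopy, do not explain how proportionality of extensions (in the sense of Schwartz) controls the defect, and do not justify why the stratumwise contribution is exactly $\chi(V_i\cap B_\varepsilon\cap f^{-1}(\delta))\cdot Eu_X(V_i)$ rather than some other local index. Invoking the Seade--Tib\u{a}r--Verjovsky interpretation here is also circular in spirit, since that result is itself a consequence of the formula you are trying to prove. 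If you want a self-contained argument, you need to actually perform the obstruction-theoretic comparison on a suitable cellular decomposition adapted to the stratification and to $f^{-1}(\delta)$, as is done in \cite{BMPS}.
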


Let us recall some important definitions. Let $\mathcal{V}=\{V_{\lambda}\}$ be a stratification of a reduced complex analytic space $X.$

\begin{definition}
Let $p$ be a point in a stratum $V_{\beta}$ of $\mathcal{V}.$ A \textbf{degenerate tangent plane of $\mathcal{V}$ at $p$} is an element $T$ of the Grassmanian manifold of $n_{\alpha}$-planes in $\mathbb{C}^n$ such that $T=\displaystyle\lim_{p_i\rightarrow p}T_{p_i}V_{\alpha},$ where $p_i\in V_{\alpha}$, $V_{\alpha}\neq V_{\beta}$ and $n_{\alpha}=\dim V_{\alpha}.$
\end{definition}

\begin{definition}
Let $(X,0)\subset(U,0)$ be a germ of complex analytic space in $\mathbb{C}^n$ equipped with a Whitney stratification and let $f:(X,0)\rightarrow(\mathbb{C},0)$ be a holomorphic function, given by the restriction of a holomorphic function $F:(U,0)\rightarrow(\mathbb{C},0).$ Then $0$ is said to be a \textbf{generic point}\index{holomorphic function germ!generic point of} of $f$ if the hyperplane $Ker(d_0F)$ is transverse in $\mathbb{C}^n$ to all degenerate tangent planes of the Whitney stratification at $0.$ 
\end{definition}

Now, let us see the definition of a Morsefication of a function. 

\begin{definition}
Let $\{V_0,V_1,\ldots,V_q\},$ with $0\in V_0,$ a Whitney stratification of the complex analytic space $X.$ A function $f:(X,0)\rightarrow(\mathbb{C},0)$ is said to be \textbf{Morse stratified} if $\dim V_0\geq1, f|_{V_0}: V_0\rightarrow\mathbb{C}$ has a Morse point at $0$ and $0$ is a generic point of $f$ with respect to $V_{i},$ for all $ i\neq0.$
\end{definition}

A \textbf{stratified Morsefication}\index{holomorphic function germ!stratified Morsefication of} of a germ of holomorphic function $f:(X,0)\rightarrow(\mathbb{C},0)$ is a perturbation $\tilde{f}$ of $f$ such that $\tilde{f}$ is Morse stratified.

In \cite{STV}, Seade, Tib\u{a}r and Verjovsky proved that the Euler obstruction of a function $f$ is also related to the number of Morse critical points of a stratified Morsefication of $f.$

\begin{proposition}(Proposition 2.3 of \cite{STV})\label{Eu_f and Morse points}
Let $f:(X,0)\rightarrow(\mathbb{C},0)$ be a germ of holomorphic function with isolated singularity at the origin. Then, \begin{center}
$Eu_{f,X}(0)=(-1)^dn_{reg},$
\end{center}
where $n_{reg}$ is the number of Morse points in $X_{reg}$ in a stratified Morsefication of $f.$
\end{proposition}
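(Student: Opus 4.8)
The plan is to replace $\overline{\nabla}f$ by the conjugate gradient of a stratified Morsefication $\tilde f$, to localize the defining obstruction cocycle at the now isolated zeros of $\overline{\nabla}\tilde f$, and to evaluate the local contributions. So first I would fix a stratified Morsefication $\tilde f$ of $f$ and a Milnor radius $\varepsilon$ valid for both. On $X\cap S_\varepsilon$ the stratified vector fields $\overline{\nabla}f$ and $\overline{\nabla}\tilde f$ are non-zero and, for a small enough perturbation, they are joined by a non-vanishing stratified homotopy (convex combinations remain tangent to each stratum); lifting this homotopy canonically (the lifting lemma of \cite{brasselet1981classes}), the Nash bundle lifts are homotopic over $\nu^{-1}(X\cap S_\varepsilon)$, so the obstruction class, hence $Eu_{f,X}(0)$, is unchanged if computed with $\overline{\nabla}\tilde f$. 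Since $f$ has an isolated singularity at $0$ it has no critical points on the intermediate strata $V_1,\dots,V_{q-1}$ in a compact neighbourhood of $0$, and neither does the small perturbation $\tilde f$; hence the zeros of $\overline{\nabla}\tilde f$ in $X\cap B_\varepsilon$ are exactly the origin and the $n_{reg}$ nondegenerate critical points $p_1,\dots,p_{n_{reg}}$ of $\tilde f|_{X_{reg}}$, and by additivity of the obstruction cocycle over its disjoint zero locus, $Eu_{f,X}(0)=\sum_{j=1}^{n_{reg}}\mathrm{ind}_{p_j}(\overline{\nabla}\tilde f)+c_0$, where $c_0$ is the contribution over $\nu^{-1}(0)$.

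Next I would evaluate both pieces. Over $\nu^{-1}(p_j)$ the map $\nu$ is a biholomorphism and the Nash bundle is $TX_{reg}$, so the contribution is the Poincar\'{e}--Hopf index at $p_j$ of the conjugate gradient of the holomorphic function $\tilde f|_{X_{reg}}$ at a nondegenerate critical point; in Morse coordinates $\tilde f|_{X_{reg}}\sim z_1^2+\dots+z_d^2$, whose conjugate gradient is $z\mapsto 2\bar z$, the composition of the orientation-preserving map $z\mapsto 2z$ with $d$ complex conjugations, of index $(-1)^d$. For $c_0$: in a small ball $\tilde f$ has isolated singularity at $0$, so $c_0=Eu_{\tilde f,X}(0)$; since $\tilde f$ is Morse stratified, $0$ is a generic point of $\tilde f$ with respect to every stratum $V_i$ with $i\neq 0$, so the nearby fibre of $\tilde f$ on each $\overline{V_i}$ has the same Euler characteristic as the generic linear slice, and Theorem~\ref{Euler obstruction of a function formula} applied to $\tilde f$ together with Theorem 3.1 of \cite{BLS} gives $c_0=Eu_X(0)-\sum_i\chi(V_i\cap B_\varepsilon\cap \tilde f^{-1}(\delta))\,Eu_X(V_i)=Eu_X(0)-Eu_X(0)=0$. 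Summing, $Eu_{f,X}(0)=(-1)^d n_{reg}$.

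The delicate step is the vanishing of $c_0$: one must justify that this local obstruction genuinely equals $Eu_{\tilde f,X}(0)$ and that a Morse-stratified function has vanishing function Euler obstruction --- the latter being in essence a repackaging of the proof of Theorem 3.1 of \cite{BLS}, comparing the Milnor fibres of $\tilde f$ on the stratum closures with those of a generic linear form, and it is here that the genericity built into the notion of stratified Morsefication is really used. A fully cohomological variant sidesteps the vector-field bookkeeping: combine Theorem~\ref{Euler obstruction of a function formula} for $f$ with Theorem 3.1 of \cite{BLS}, replace the Milnor fibre of $f$ on each $V_i$ by the nearby fibre of $\tilde f$ (diffeomorphic, by stability of the Milnor fibration under small perturbations), and use stratified Morse theory to compare the latter with the generic linear slice: they agree on the strata $V_i$ with $i\neq q$, because $0$ is a generic point of $\tilde f$ there, while on $V_q=X_{reg}$ the nearby fibre carries $n_{reg}$ extra vanishing cycles of dimension $d-1$ coming from the Morse points, so the two Euler characteristics differ by $(-1)^{d-1}n_{reg}$; since $Eu_X(V_q)=1$, this again yields $Eu_{f,X}(0)=(-1)^d n_{reg}$.
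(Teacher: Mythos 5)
Both of your arguments founder on the same point: the stratified Morse points that a Morsefication creates on the \emph{intermediate} strata $V_1,\dots,V_{q-1}$. In the vector-field argument you assert that, since $f$ has an isolated stratified singularity at $0$, neither $f$ nor the small perturbation $\tilde f$ has critical points on $V_1,\dots,V_{q-1}$ near the origin. That is false: take $X=\mathbb{C}^2$ with the Whitney stratification $V_0=\{0\}$, $V_1=\{y=0\}\setminus\{0\}$, $V_2=\mathbb{C}^2\setminus\{y=0\}$, and $f=y+x^2$; then $f$ has an isolated stratified critical point at $0$, yet the Morsefication $\tilde f=y+x^2+t(ax+by)$ acquires a stratified Morse point on $V_1$ at $x=-ta/2$, inside $B_\varepsilon$. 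Hence your localization $Eu_{f,X}(0)=\sum_j \mathrm{ind}_{p_j}(\overline{\nabla}\tilde f)+c_0$ omits zeros of $\overline{\nabla}\tilde f$ lying on the lower-dimensional strata, and the entire content of the STV proposition is precisely that those zeros, which do occur, contribute $0$ to the obstruction in the Nash bundle (by Whitney's condition (a) together with the genericity of the covector of $\tilde f$ at such points, the canonical lift does not vanish over them), while only the top-stratum Morse points contribute $(-1)^d$. You prove the analogous vanishing only at the origin (your $c_0=0$), not at these points, and that is where the real work of the proof lies.

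The cohomological variant hides the same gap: the claim that $\chi(V_i\cap B_\varepsilon\cap\tilde f^{-1}(\delta))=\chi(V_i\cap B_\varepsilon\cap l^{-1}(\delta))$ for every $i\neq q$ is false; in the example above the curve stratum meets $\tilde f^{-1}(\delta)$ in two points but a generic linear slice in one. Genericity of $d_0\tilde F$ with respect to degenerate tangent planes controls behaviour at the origin, not the count of intersections with intermediate strata at the Milnor-ball scale. What is true is the equality of the \emph{weighted} sums $\sum_i\chi(\cdot)\,Eu_X(V_i)$ up to $(-1)^{d-1}n_{reg}$, where the discrepancies produced by lower-strata Morse points cancel across strata rather than stratum by stratum (in the example, the excess $2-1$ on $V_1$ cancels against $-1-0$ on $V_2$, and $n_{reg}=0$); this cancellation is exactly the mechanism behind Theorem \ref{Corollary 4.3 of DG}. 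Establishing it --- either by proving that the local obstruction at a stratified Morse point on a stratum of dimension $<d$ vanishes, or by a normal-Morse-data/weighted Euler characteristic identity --- is the missing key step; without it neither version of your argument yields the formula $Eu_{f,X}(0)=(-1)^d n_{reg}$.
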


Let $X$ be a reduced complex analytic space (not necessarily equidimensional) of dimension $d$ in an open set $U\subseteq\mathbb{C}^n$ and let $f:(X,0)\rightarrow(\mathbb{C},0)$ be a holomorphic map. We write $X^f=X\cap\{f=0\}.$

\begin{definition}\label{good stratification}
A \textbf{good stratification of $X$ relative to $f$} is a stratification $\mathcal{V}$ of $X$ which is adapted to $X^f$ such that $\{V_{\lambda}\in\mathcal{V},V_{\lambda}\nsubseteq X^f\}$ is a Whitney stratification of $X\setminus X^f$ and such that for any pair $(V_{\lambda},V_{\gamma})$ such that $V_{\lambda}\nsubseteq X^f$ and $V_{\gamma}\subseteq X^f,$ the $(a_f)$-Thom condition is satisfied, that is, if $p\in V_{\gamma}$ and $p_i\in V_{\lambda}$ are such that $p_i\rightarrow p$ and $T_{p_i} V(f|_{V_{\lambda}}-f|_{V_{\lambda}}(p_i))$ converges to some $\mathcal{T},$ then $T_p V_{\gamma}\subseteq\mathcal{T}.$
\end{definition}



Let $\mathcal{V}$ be a good stratification of $X$ relative to $f.$

\begin{definition}
The \textbf{critical locus of $f$ relative to $\mathcal{V}$}, $\Sigma_{\mathcal{V}}f,$ is given by the union \begin{center}$\Sigma_{\mathcal{V}}f=\displaystyle\bigcup_{V_{\lambda}\in\mathcal{V}}\Sigma(f|_{V_{\lambda}}).$\end{center}
\end{definition}

Let $g:(X,0)\rightarrow(\mathbb{C},0)$ be a holomorphic function-germ.

\begin{definition}
If $\mathcal{V}=\{V_{\lambda}\}$ is a stratification of $X,$ the \textbf{relative polar variety of $f$ and $g$ with respect to $\mathcal{V}$}, denoted by $\Gamma_{f,g}(\mathcal{V}),$ is the the union $\cup_{\lambda}\Gamma_{f,g}(V_{\lambda}),$ where $\Gamma_{f,g}(V_{\lambda})$ denotes the closure in $X$ of the critical locus of $(f,g)|_{V_{\lambda}\setminus X^f}.$  
\end{definition}

\begin{definition}
If $\mathcal{V}=\{V_{\lambda}\}$ is a stratification of $X,$ the \textbf{symmetric relative polar variety of $f$ and $g$ with respect to $\mathcal{V}$}, $\tilde{\Gamma}_{f,g}(\mathcal{V}),$ is the union $\cup_{\lambda}\tilde{\Gamma}_{f,g}(V_{\lambda}),$ where $\tilde{\Gamma}_{f,g}(V_{\lambda})$ denotes the closure in $X$ of the critical locus of $(f,g)|_{V_{\lambda}\setminus (X^f\cup X^g)},$  $X^f=X\cap \{f=0\}$ and $X^g=X\cap \{g=0\}. $ 
\end{definition}

\begin{definition}\label{definition prepolar}
Let $\mathcal{V}$ be a good stratification of $X$ relative to a function
$f:(X,0)\rightarrow(\mathbb{C},0).$ A function $g :(X, 0)\rightarrow(\mathbb{C},0)$ is \textbf{prepolar with respect to $\mathcal{V}$ at the origin} if the origin is a stratified isolated critical point, that is, $0$ is an isolated point of $\Sigma_{\mathcal{V}}g.$
\end{definition}

\begin{definition}\label{definition tractable}
A function $g :(X, 0)\rightarrow(\mathbb{C},0)$ is \textbf{tractable at the origin with respect to a good stratification $\mathcal{V}$ of $X$ relative to $f :(X, 0)\rightarrow(\mathbb{C},0)$} if $dim_0 \ \tilde{\Gamma}^1_{f,g}(\mathcal{V})\leq1$ and, for all strata $V_{\alpha}\subseteq X^f$,
$g|_{V_{\alpha}}$ has no critical point in a neighbourhood of the origin except perhaps at the origin itself.

\end{definition}




We recall the definition of the Brasselet number introduced by Dutertre and Grulha \cite{DG}. Let $f: (X,0)\rightarrow(\mathbb{C},0)$ be a germ of holomorphic function, $X$ a suficiently small representative of the germ, and let $\mathcal{V}$ be a good stratification of $X$ relative to $f.$ We denote by $V_1,\ldots, V_q$ the strata of $\mathcal{V}$ that are not contained in $\{f=0\}$ and we assume that $V_1,\ldots, V_{q-1}$ are connected and that $V_{q}=\linebreak X_{reg}\setminus \{f=0\}.$ Note that $V_q$ could be not connected.  
 
\begin{definition}
Suppose that $X$ is equidimensional. Let $\mathcal{V}$ be a good stratification of $X$ relative to $f.$ The \textbf{Brasselet number} of $f$ at the origin, $B_{f,X}(0),$ is defined by \begin{center}
$B_{f,X}(0)=\sum_{i=1}^{q}\chi(V_i\cap f^{-1}(\delta)\cap B_{\varepsilon})Eu_X(V_i),$
\end{center}
where $0<|\delta|\ll\varepsilon\ll1.$
\end{definition} 

\noindent\textbf{Remark:} If $V_q^i$ is a connected component of $V_{q},$ $Eu_X(V_q^i)=1.$

Notice that if $f$ has a stratified isolated singularity at the origin, then \linebreak$B_{f,X}(0)=Eu_{X}(0)-Eu_{f,X}(0)$ (see Theorem \ref{Euler obstruction of a function formula}).

In \cite{DG}, Dutertre and Grulha proved formulas describing the topological relation between the Brasselet number and a number of certain critical points of a special type of perturbation of functions. Let us now present some of these results. First we need the definition of a special type of Morsefication, introduced by Dutertre and Grulha.

\begin{definition}
A \textbf{partial Morsefication} of $g:f^{-1}(\delta)\cap X\cap B_{\varepsilon}\rightarrow\mathbb{C}$ is a function $\tilde{g}: f^{-1}(\delta)\cap X\cap B_{\varepsilon}\rightarrow\mathbb{C}$ (not necessarily holomorphic) which is a local Morsefication of all isolated critical points of $g$ in $f^{-1}(\delta)\cap X\cap \{g\neq 0\}\cap B_{\varepsilon}$ and which coincides with $g$ outside a small neighborhood of these critical points.
\end{definition} 

Let $g : (X, 0) \rightarrow (\mathbb{C},0)$ be a holomorphic function which is tractable at the origin with respect to $\mathcal{V}$ relative to $f .$ Then $\tilde{\Gamma}_{f,g}$ is a complex analytic curve and for $0<|\delta|\ll1$ the critical points of $g|_{f^{-1}(\delta)\cap X}$ in $B_{\varepsilon}$ lying outside $\{g=0\}$ are isolated.
Let $\tilde{g}$ be a partial Morsefication of $g:f^{-1}(\delta)\cap X\cap B_{\varepsilon}\rightarrow\mathbb{C}$ and, for each \linebreak$i\in\{1,\ldots, q\},$ let $n_i$ be the number of stratified Morse critical points of $\tilde{g}$ appearing on $V_i\cap f^{-1}(\delta)\cap \{g\neq 0\}\cap B_{\varepsilon}.$

\begin{theorem}\label{Corollary 4.3 of DG}(Corollary 4.3 of \cite{DG})
Suppose that $X$ is equidimensional and that $g$ is tractable at the origin with respect to $\mathcal{V}$ relative to $f.$ For $0<|\delta|\ll\varepsilon\ll 1,$ we have\begin{center}
 $\chi(X\cap f^{-1}(\delta)\cap B_{\varepsilon},Eu_X)-\chi(X\cap g^{-1}(0)\cap f^{-1}(\delta)\cap B_{\varepsilon},Eu_X)=(-1)^{d-1}n_q.$
 \end{center}
\end{theorem}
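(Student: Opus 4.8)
The plan is to read both terms as Euler characteristics of the Milnor fibre of $f$ weighted by the constructible function $Eu_{X}$, to reduce the left-hand side to the weighted Euler characteristic of the locus where $g\neq 0$ by additivity, and then to compute that locus stratum by stratum by realising $g$ as a fibration over its regular values and inserting the partial Morsefication $\tilde g$. This also exhibits the statement as a relative, Brasselet-number version of Proposition~\ref{Eu_f and Morse points}: the term $(-1)^{d-1}n_{q}$ is exactly a count of Morse points on the regular part.

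First I would fix $0<|\delta|\ll\varepsilon\ll1$, set $M=X\cap f^{-1}(\delta)\cap B_{\varepsilon}$ and $M^{g}=M\cap g^{-1}(0)$, and note that since $\delta\neq0$ the only strata meeting $M$ are $V_{1},\dots,V_{q}$, so both weighted Euler characteristics are the finite sums in the definition of the Brasselet number. Because $f^{-1}(\delta)$ is transverse to the small spheres, each piece $V_{i}\cap M$ is the interior of a compact subanalytic manifold with corners all of whose boundary faces have odd real dimension; hence its ordinary Euler characteristic agrees with the compactly supported one and is additive over the partition $V_{i}\cap M=(V_{i}\cap M^{g})\sqcup(V_{i}\cap M\cap\{g\neq 0\})$. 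This yields
\[
\chi(M,Eu_{X})-\chi(M^{g},Eu_{X})=\sum_{i=1}^{q}\chi\!\left(V_{i}\cap M\cap\{g\neq 0\}\right)Eu_{X}(V_{i}).
\]

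Next I would analyse each $V_{i}\cap M\cap\{g\neq 0\}$ through the map $g$. Tractability forces $\tilde\Gamma_{f,g}(\mathcal V)$, hence $\Gamma_{f,g}(\mathcal V)$ away from $X^{g}$, to be at most a curve, so $g|_{V_{i}\cap M}$ has only finitely many critical points off $\{g=0\}$; moreover the $(a_{f})$-Thom condition built into the good stratification together with the second clause of tractability prevents these critical points from accumulating on $\{g=0\}$. Therefore $g$ restricts to a locally trivial fibration over the complement of its finitely many critical values in a small punctured disc, and since a punctured disc and an annulus have zero Euler characteristic only the singular fibres contribute. Replacing $g$ by $\tilde g$, which agrees with $g$ near $\{g=0\}$ and near the sphere and has exactly $n_{i}$ nondegenerate critical points on $V_{i}\cap M$, a local Picard–Lefschetz computation (the Milnor fibre of $z_{1}^{2}+\dots+z_{d_{i}-1}^{2}$ is a $(d_{i}-2)$-sphere, $d_{i}=\dim V_{i}$, and a node has real Morse index $d_{i}-1$) gives $\chi(V_{i}\cap M\cap\{g\neq 0\})=(-1)^{d_{i}-1}n_{i}$. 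For the top stratum this is $(-1)^{d-1}n_{q}$, since $d_{q}=d$ and $Eu_{X}(V_{q})=1$ on every connected component.

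The heart of the matter — and the step I expect to be the main obstacle — is then to show that the lower strata contribute nothing, i.e.\ that $\sum_{i<q}(-1)^{d_{i}-1}n_{i}\,Eu_{X}(V_{i})=0$. This is where the full force of the hypothesis that $g$ is tractable with respect to a good stratification of $X$ relative to $f$ must be used: one has to rule out critical points of $g|_{V_{i}\cap f^{-1}(\delta)}$ off $\{g=0\}$ for $i<q$ — equivalently, that the relative polar curve of $(f,g)$ through the origin lies only on the regular part $V_{q}$ — so that $n_{i}=0$ for $i<q$. Once this vanishing is in hand the identity becomes $\chi(M,Eu_{X})-\chi(M^{g},Eu_{X})=(-1)^{d-1}n_{q}$, as claimed. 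The remaining ingredients — additivity of the Euler characteristic for subanalytic sets, local triviality of $g$ over regular values, and the Picard–Lefschetz count — are standard and I would only sketch them.
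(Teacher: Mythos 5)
This statement is quoted by the paper from \cite{DG} (Corollary 4.3 there) without proof, so your attempt can only be checked against the mechanism of Dutertre--Grulha's result; checked that way, it breaks down precisely at the step you yourself call the heart of the matter. Tractability does \emph{not} imply $n_i=0$ for $i<q$. Definition \ref{definition tractable} only asks that $\dim_0\tilde{\Gamma}^1_{f,g}(\mathcal{V})\leq 1$ and that $g$ has no critical points near $0$ on the strata contained in $X^f$; it in no way forces the relative polar curve to lie in the regular part, and in general $\tilde{\Gamma}_{f,g}$ has branches inside lower-dimensional strata, so the partial Morsefication $\tilde{g}$ does have Morse points on $V_i\cap f^{-1}(\delta)\cap\{g\neq 0\}$ with $i<q$. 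If $n_i=0$ for $i<q$ were automatic, the weighted and unweighted statements would coincide and there would be no reason for Theorem \ref{4.4 DG} (or Theorem \ref{Generalization 6.4 of DG}) to single out the top stratum; the whole point of weighting by $Eu_X$ is that the lower-strata Morse points may be present but are invisible to $\chi(\cdot,Eu_X)$.

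There is a second, related flaw: the per-stratum identity $\chi\bigl(V_i\cap M\cap\{g\neq 0\}\bigr)=(-1)^{d_i-1}n_i$ is not correct as stated, because $\tilde{g}$ restricted to the open stratum piece is not proper and fails to be a locally trivial fibration over the critical values coming from Morse points on adjacent \emph{lower} strata; such points change the topology of the higher stratum pieces too, through the normal Morse data. The correct bookkeeping is stratified Morse theory for constructible functions: a Morse point of $\tilde{g}$ on $V_i$ contributes $(-1)^{d_i-1}\bigl(1-\chi(\mathcal{L}_i)\bigr)$ to $\chi(M\cap\{g\neq 0\})$ and $(-1)^{d_i-1}\bigl(Eu_X(V_i)-\chi(\mathcal{L}_i,Eu_X)\bigr)$ to $\chi(M\cap\{g\neq 0\},Eu_X)$, where $\mathcal{L}_i$ is the complex link of $V_i$ in $X$. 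The missing key idea is the identity $\chi(\mathcal{L}_i,Eu_X)=Eu_X(V_i)$ for every singular stratum $V_i$ (the pointwise version of the Brasselet--L\^e--Seade formula quoted in Section 1, i.e.\ the vanishing of the normal Morse index of the constructible function $Eu_X$ along singular strata); this is what annihilates every contribution with $i<q$ and leaves $(-1)^{d-1}n_q$ from the top stratum, where the complex link is empty and $Eu_X\equiv 1$. Your opening reductions (additivity of $\chi$, passing to $M\cap\{g\neq 0\}$) are fine, but the conclusion cannot be reached by arguing that $n_i$ vanishes for $i<q$.
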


If one supposes, in addition, that $g$ is prepolar, a consequence of this result is a Lê-Greuel type formula for the Brasselet number.

\begin{theorem}\label{4.4 DG}(Theorem 4.4 of \cite{DG})
Suppose that $X$ is $d$-equidimensional and that $g$ is prepolar with respect to $\mathcal{V}$ at the origin. For $0<|\delta|\ll\varepsilon\ll 1,$ we have\begin{center}
 $B_{f,X}(0)-B_{f,X^g}(0)=(-1)^{d-1}n_q,$
 \end{center}
 where $n_q$ is the number of stratified Morse critical points on the top stratum $V_q\cap f^{-1}(\delta)\cap B_{\varepsilon}$ appearing in a Morsefication of $g:X\cap f^{-1}(\delta)\cap B_{\varepsilon}\rightarrow \mathbb{C}.$
\end{theorem}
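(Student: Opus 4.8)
The plan is to derive this Lê–Greuel type formula for the Brasselet number as a direct consequence of Theorem~\ref{Corollary 4.3 of DG} together with the definition of $B_{f,X}(0)$ and $B_{f,X^g}(0)$, once we check that the prepolarity hypothesis lets us apply that corollary with $n_q$ counting exactly the Morse points on the top stratum. First I would observe that prepolarity of $g$ (the origin being an isolated point of $\Sigma_{\mathcal V}g$) implies tractability: the stratified isolated critical point condition forces $\dim_0\tilde\Gamma^1_{f,g}(\mathcal V)\le 1$ and kills critical points of $g|_{V_\alpha}$ near the origin for strata $V_\alpha\subseteq X^f$, so Theorem~\ref{Corollary 4.3 of DG} applies. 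Moreover, under prepolarity the only stratified Morse critical points produced by a Morsefication of $g$ on $f^{-1}(\delta)\cap X\cap B_\varepsilon$ away from $\{g=0\}$ occur on the top stratum $V_q$ (the lower strata contribute nothing because $g$ restricted to them has no critical points near $0$), so the quantity $n_q$ of Theorem~\ref{Corollary 4.3 of DG} coincides with the $n_q$ in the statement.

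Next I would unwind the weighted Euler characteristics appearing in Theorem~\ref{Corollary 4.3 of DG}. By the very definition of the Brasselet number,
\begin{equation*}
\chi(X\cap f^{-1}(\delta)\cap B_\varepsilon,\mathrm{Eu}_X)=\sum_{i=1}^q\chi\bigl(V_i\cap f^{-1}(\delta)\cap B_\varepsilon\bigr)\mathrm{Eu}_X(V_i)=B_{f,X}(0).
\end{equation*}
For the second term one needs that $\mathcal V^g:=\{V_\lambda\cap X^g\}$ (suitably refined) is a good stratification of $X^g$ relative to $f|_{X^g}$, and that the weighted Euler characteristic $\chi(X\cap g^{-1}(0)\cap f^{-1}(\delta)\cap B_\varepsilon,\mathrm{Eu}_X)$ equals $B_{f,X^g}(0)$. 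The subtle point here is that $\mathrm{Eu}_X$ restricted to $X^g$ need not equal $\mathrm{Eu}_{X^g}$ in general; however, when $g$ is prepolar the hyperplane section is generic enough (this is essentially Lê's result that a generic hyperplane slice preserves the local Euler obstruction, cf.\ \cite{BLS,LT}) that $\mathrm{Eu}_X(V_i)=\mathrm{Eu}_{X^g}(V_i\cap X^g)$ on the relevant strata, giving
\begin{equation*}
\chi(X\cap g^{-1}(0)\cap f^{-1}(\delta)\cap B_\varepsilon,\mathrm{Eu}_X)=\sum_{i}\chi\bigl(V_i\cap X^g\cap f^{-1}(\delta)\cap B_\varepsilon\bigr)\mathrm{Eu}_{X^g}=B_{f,X^g}(0).
\end{equation*}

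Substituting both identities into Theorem~\ref{Corollary 4.3 of DG} yields exactly $B_{f,X}(0)-B_{f,X^g}(0)=(-1)^{d-1}n_q$, which is the claim. I expect the main obstacle to be the bookkeeping in the second substitution: one must argue carefully that $X^g$ is $(d-1)$-equidimensional near the origin (so that $B_{f,X^g}(0)$ is even defined), that the induced stratification is a genuine good stratification relative to $f|_{X^g}$, and that prepolarity supplies the genericity needed to identify the weighted Euler characteristic over $X^g$ with the Brasselet number $B_{f,X^g}(0)$ — i.e.\ that the local Euler obstruction is unchanged by this particular slicing. Everything else is a formal rewriting of the definitions, and the passage from tractability to the ``$n_q$ only'' count is immediate from Definition~\ref{definition prepolar} and Definition~\ref{definition tractable}.
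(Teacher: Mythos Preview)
Your proposal is correct and follows exactly the approach the paper indicates: the paper does not prove this theorem in detail but simply records it as Theorem~4.4 of \cite{DG}, prefacing it with the remark that ``if one supposes, in addition, that $g$ is prepolar, a consequence of this result [Theorem~\ref{Corollary 4.3 of DG}] is a L\^e--Greuel type formula for the Brasselet number.'' You have correctly unpacked that remark: prepolarity gives tractability so Theorem~\ref{Corollary 4.3 of DG} applies, the first weighted Euler characteristic is $B_{f,X}(0)$ by definition, and the second equals $B_{f,X^g}(0)$ precisely because a prepolar slice preserves the local Euler obstruction on the strata (the point you flag as the main obstacle, and which is indeed the substantive step in \cite{DG}).
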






\hspace{0,5cm} In \cite{santana2019brasselet}, the author generalized the last formula to the case where the critical locus of $g,$ $\Sigma_{\mathcal{V}} g,$ is one-dimensional and that $\Sigma_{\mathcal{V}} g\cap \{f=0\}=\{0\}.$ We can decompose $\Sigma_{\mathcal{V}}g$ into branches $b_j, \Sigma_{\mathcal{V}} g=\bigcup_{\alpha=1}^q\Sigma g|_{V_{\alpha}}\cup\{0\}=b_1\cup\ldots\cup b_r,$ where $b_j\subseteq V_{\alpha},$ for some $\alpha\in\{1,\ldots,q\}.$  Let $\delta$ be a regular value of $f, 0<|\delta|\ll1,$ and let us write, for each $j\in\{1,\ldots,r\}, f^{-1}(\delta)\cap b_j=\{x_{i_1},\ldots,x_{i_{k(j)}}\}$ and $m_{f,b_j}=k(j).$  Let $\varepsilon$ be sufficiently small such that the local Euler obstruction of $X$ and of $X^g$ are constant on $b_j\cap B_{\varepsilon}$. In this case, we denote by $Eu_{X}(b_j)$ (respectively, $Eu_{X^g}(b_j)$) the local Euler obstruction of $X$ (respectively, $X^g$) at a point of $b_j\cap B_{\varepsilon}.$

\begin{theorem}\label{Generalization 6.4 of DG}
Suppose that $g$ is tractable at the origin with respect to  $\mathcal{V}$ relative to $f.$ Then, for $0<|\delta|\ll\varepsilon\ll1,$ 
\begin{center}
$B_{f,X}(0)-B_{f,X^g}(0)-\sum_{j=1}^{r}m_{f,b_j}(Eu_X(b_j)-Eu_{X^g}(b_j))=(-1)^{d-1}n_q,$
\end{center}
where $n_q$ is the number of stratified Morse critical points of a partial Morsefication of \linebreak$g:X\cap f^{-1}(\delta)\cap B_{\varepsilon}\rightarrow\mathbb{C}$ appearing on $X_{reg}\cap f^{-1}(\delta)\cap \{g\neq 0\}\cap B_{\varepsilon}.$ 
\end{theorem}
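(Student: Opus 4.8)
The plan is to reduce Theorem \ref{Generalization 6.4 of DG} to the tractable Lê–Greuel formula of Theorem \ref{4.4 DG} (equivalently, to Corollary 4.3 of \cite{DG} as stated in Theorem \ref{Corollary 4.3 of DG}) by a localization-and-counting argument over the branches of $\Sigma_{\mathcal{V}}g$. The key point is that tractability guarantees that $\tilde\Gamma^1_{f,g}(\mathcal{V})$ is a curve and, since $\Sigma_{\mathcal{V}}g\cap\{f=0\}=\{0\}$, for $0<|\delta|\ll 1$ the fiber $f^{-1}(\delta)\cap X\cap B_\varepsilon$ meets $\Sigma_{\mathcal{V}}g$ in finitely many points, distributed along the branches $b_1,\dots,b_r$ with exactly $m_{f,b_j}=k(j)$ points on $b_j$. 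First I would set up a Whitney (indeed $(a_f)$-regular) stratification of $f^{-1}(\delta)\cap X\cap B_\varepsilon$ induced by $\mathcal{V}$, and invoke the standard fact that, near each such intersection point $x_{i_\ell}\in b_j$, the local topology of the pair $(f^{-1}(\delta)\cap X,\ f^{-1}(\delta)\cap X^g)$ with its Euler-obstruction weighting is the same as the local model coming from a tractable/prepolar situation at a generic point of the branch; the local Euler obstruction data there are precisely $Eu_X(b_j)$ and $Eu_{X^g}(b_j)$, which are constant along $b_j\cap B_\varepsilon$ by the choice of $\varepsilon$.

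Next I would run the additivity of the constructible-function Euler characteristic $\chi(-,Eu_X)$ over the decomposition of $f^{-1}(\delta)\cap X\cap B_\varepsilon$ into the part away from $\Sigma_{\mathcal{V}}g$ and small Milnor balls $B(x_{i_\ell})$ around each point of $f^{-1}(\delta)\cap\Sigma_{\mathcal{V}}g$. On the complement of these small balls, a partial Morsefication $\tilde g$ of $g$ has only stratified Morse points, $n_q$ of them on the top stratum outside $\{g=0\}$ (together with Morse points on lower strata that contribute with their own Euler-obstruction weights but which, by the tractability hypothesis — $g|_{V_\alpha}$ has no critical points near $0$ for $V_\alpha\subseteq X^f$ — and by the genericity built into a partial Morsefication, organize exactly as in the proof of Corollary 4.3 of \cite{DG}). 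Carrying the Brasselet-number bookkeeping through gives the left-hand side $B_{f,X}(0)-B_{f,X^g}(0)$ minus the sum of the local contributions at the $m_{f,b_j}$ points on each branch $b_j$; each such local contribution equals $Eu_X(b_j)-Eu_{X^g}(b_j)$ by the local-model identification of the previous step (this is the one-dimensional analogue of the isolated-singularity computation $B_{f,X}(0)-B_{f,X^g}(0)=(-1)^{d-1}n_q$ of Theorem \ref{4.4 DG}, applied branchwise at points where the singularity is "prepolar after slicing by $f^{-1}(\delta)$"). Summing over $j$ produces the term $\sum_{j=1}^r m_{f,b_j}(Eu_X(b_j)-Eu_{X^g}(b_j))$, and the remaining global term is $(-1)^{d-1}n_q$ by Theorem \ref{Corollary 4.3 of DG}.

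Finally I would reconcile signs and the Euler-obstruction normalization $Eu_X(V_q^i)=1$ on the top stratum so that the stated identity comes out exactly, and check the two limiting regimes $0<|\delta|\ll\varepsilon\ll 1$ are compatible (first fix $\varepsilon$ small enough that $Eu_X$, $Eu_{X^g}$ are constant on each $b_j\cap B_\varepsilon$ and that $\mathcal{V}$-transversality holds on $S_\varepsilon$, then take $|\delta|$ small). I expect the main obstacle to be the local step: rigorously justifying that at each point $x_{i_\ell}$ of $f^{-1}(\delta)\cap b_j$ the slice $f^{-1}(\delta)$ cuts $\Sigma_{\mathcal{V}}g$ transversally (in the stratified sense) so that $g$ restricted to $f^{-1}(\delta)\cap X$ has there an isolated stratified singularity whose Brasselet/Euler-obstruction defect is exactly $Eu_X(b_j)-Eu_{X^g}(b_j)$ — this uses the $(a_f)$-Thom condition from the good stratification together with the $\dim_0\tilde\Gamma^1_{f,g}\le 1$ half of tractability, and is where the hypothesis $\Sigma_{\mathcal{V}}g\cap\{f=0\}=\{0\}$ does the real work. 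Everything else is additivity of $\chi(-,Eu)$ and a reference to \cite{DG, santana2019brasselet}.
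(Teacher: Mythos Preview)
The present paper does not prove this theorem: it is quoted from \cite{santana2019brasselet} as background (the sentence just before the statement reads ``In \cite{santana2019brasselet}, the author generalized the last formula\ldots''), and no argument is given here. So there is no ``paper's own proof'' to compare against.

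That said, your outline is more elaborate than the result requires. Theorem~\ref{Corollary 4.3 of DG} already holds under tractability alone and gives
\[
B_{f,X}(0)\;-\;\chi\bigl(X^g\cap f^{-1}(\delta)\cap B_\varepsilon,\,Eu_X\bigr)\;=\;(-1)^{d-1}n_q,
\]
since the first weighted Euler characteristic in Corollary~4.3 of \cite{DG} is precisely $B_{f,X}(0)$. Hence the entire content of the theorem is the comparison of $\chi(X^g\cap f^{-1}(\delta)\cap B_\varepsilon,\,Eu_X)$, where the weight comes from the ambient $X$, with $B_{f,X^g}(0)=\chi(X^g\cap f^{-1}(\delta)\cap B_\varepsilon,\,Eu_{X^g})$, where the weight comes from $X^g$ itself. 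Off the branches $b_j$ the induced stratification $\{V_\alpha\cap X^g\}$ is Whitney (this is exactly where prepolarity fails and tractability does not), so $Eu_{X^g}=Eu_X$ there; the two constructible functions differ only along the $b_j$, which meet $f^{-1}(\delta)$ in $m_{f,b_j}$ points each, and the pointwise difference at any such point is $Eu_X(b_j)-Eu_{X^g}(b_j)$. Additivity of $\chi(-,\cdot)$ for constructible functions finishes the computation. Your Milnor-ball decomposition and ``prepolar after slicing'' local model are not needed; in particular the step you flagged as the main obstacle --- a local L\^e--Greuel identity at each $x_{i_\ell}$ --- can be bypassed entirely, because one is only subtracting two weight functions on the \emph{same} space $X^g\cap f^{-1}(\delta)\cap B_\varepsilon$, not running a Morse-theoretic argument branch by branch.
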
 

\section{Relations between the Euler obstruction of a Map, of a Function and the Brasselet number}

The notion of local Chern obstruction extends the notion of local Euler obstruction in the case of collections of germs of 1-forms. More precisely, Ebeling and Gusein-Zade perform the following construction.

Let $(X, 0)\subset(\mathbb{C}^{n},0)$ be the germ of a $d$-equidimensional reduced complex analytic variety at the origin. Let $\{\omega_j^{(i)}\}$ be a collection of germs of 1-forms on $(\mathbb{C}^{n},0)$ such that $i=1,\ldots, s$; $j=1, \ldots, d-k_{i}+1$, where the $k_{i}$ are non-negative integers with $\sum_{i=1}^{s}k_{i}=d$. Let $\varepsilon>0$ be small enough so that there is a representative $X$ of the germ $(X, 0)$ and representatives $\{\omega_{j}^{(i)}\}$ of the germs of 1-forms inside the ball $B_{\varepsilon}(0)\subset\mathbb{C}^{n}.$

\begin{definition}
	For a fixed $i$, the locus of the subcollection $\{\omega_{j}^{(i)}\}$ is the set of points $x\in X$ such that there exists a sequence $x_{n}$ of points from the non-singular part $X_{{\rm{reg}}}$ of the variety $V$ such that the sequence $T_{x_{n}}X_{{\rm{reg}}}$ of the tangent spaces at the points $x_{n}$ has a limit $L$ (in $G(d,n)$) and the restrictions of the 1-forms $\omega_{1}^{(i)}, \ldots, \omega_{d-k_{i}+1}^{(i)}$ to the subspace $L\subset T_{x}\mathbb{C}^{n}$ are linearly dependent.
\end{definition}

\begin{definition}\label{specialpoint} A point $x\in X$ is called a {\it special point} of the collection $\{\omega_{j}^{(i)}\}$ if it is in the intersection of the loci of the subcollections $\{\omega_{j}^{(i)}\}$ for each $i=1,\ldots, s$. The collection $\{\omega_{j}^{(i)}\}$ of 1-forms has an isolated special point at $\{0\}$ if it has no special point on $X$ in a punctured neighbourhood of the origin.
\end{definition}

Note that in Definition \ref{specialpoint}, for each fixed $i$, we require each subcollection $\{\omega_{j}^{(i)}\}$ to be linearly dependent when restricted to the same limit plane. Also note that if an element of the collection has less than maximal rank at a point, then it is linearly dependent on all planes passing through the point.

Let $\{\omega_{j}^{(i)}\}$ be a collection of germs of 1-forms on $(X,0)$ with an isolated special point at the origin. Let $\nu:\tilde{X}\rightarrow X$ be the Nash transformation of the variety $X$ and $\tilde{T}$ be the Nash bundle. The collection of 1-forms $\{\omega_{j}^{(i)}\}$ gives rise to a section $\Gamma(\omega)$ of the bundle
$$\tilde{\mathbb{T}}=\bigoplus_{i=1}^{s}\bigoplus_{j=1}^{d-k_{i}+1}\tilde{T}_{i,j}^{*},$$ where $\tilde{T}_{i,j}^{*}$ are copies of the dual Nash bundle $\tilde{T}^{*}$ over the Nash transformation $\tilde{X}.$

Let $\mathbb{D}\subset\tilde{\mathbb{T}}$ be the set of pairs $(x,\{\alpha_{j}^{(i)}\})$ where $x\in\tilde{X}$ and the collection of 1-forms $\{\alpha_{j}^{(i)}\}$ is such that $\alpha_{1}^{(i)},\ldots, \alpha_{n-k_{i}+1}^{(i)}$ are linearly dependent for each $i=1, \ldots, s.$

\begin{definition}
	Let $0$ be a special point of the collection $\{\omega_{j}^{(i)}\}$. The local Chern obstruction ${\rm{Ch}}_{X,0}\{\omega_{j}^{(i)}\}$ of the collection of germs of 1-forms $\{\omega_{j}^{(i)}\}$ on $(X,0)$ at the origin is the obstruction to extend the section $\Gamma(\omega)$ of the fibre bundle $\tilde{\mathbb{T}}\backslash\mathbb{D}\rightarrow \tilde{X}$ from $\nu^{-1}(X\cap S_{\varepsilon})$ to $\nu^{-1}(X\cap B_{\varepsilon})$.
\end{definition}

We can see that we have the correct obstruction dimension as follows. For each $\tilde{\mathbb{T}}_i=\bigoplus_{j=1}^{d-k_{i}+1}\tilde{T}_{i,j}^{*}$ let $\mathbb{D}_{i}\subset\tilde{\mathbb{T}}_{i}$ be the set of pairs $(x,\{\alpha_{j}^{(i)}\})$, where $x\in\tilde{X}$ and the collection $\{\alpha_{j}^{(i)}\}$ is such that $\alpha_{1}^{(i)}, \ldots,\alpha_{d-k_{i}+1}^{(i)}$ are linearly dependent. Then, the set $\tilde{\mathbb{T}}_i\backslash\mathbb{D}_{i}$ is a Stiefel manifold, with associated obstruction dimension equal to $k_i$, and therefore the obstruction dimension for $\tilde{\mathbb{T}}$ is $\sum_{i=1}^{s} k_i=d$.


The following result is a consequence of \cite[Proposition 3.3]{Ebeling2007a}.

\begin{proposition}\rm{
		Let $(X,0)\subset(\mathbb{C}^{n},0)$ be the germ of a $d$-equidimensional reduced complex analytic variety at the origin. Let $\{\omega_{j}^{(i)}\}$ be a collection of germs of $1$-forms on $(\mathbb{C}^{n},0)$ such that $i=1, \ldots, s$; $j=1, \ldots, d-k_{i}+1,$ where the $k_i$ are non-negative integers with $\sum_{i=1}^{s}k_{i}=d$. Let $0$ be an isolated special point for the collection. If $\omega^{(i)}$, $i=2, \ldots, s$, are generic collections of linear forms, then the number ${\rm{Ch}}_{X,0}\{\omega_{j}^{(i)}\}$ does not depend on the choice of the subcollections $\omega^{(i)}$, $i=2, \ldots, s$.}
\end{proposition}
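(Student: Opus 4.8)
The plan is to reduce the statement to the corresponding independence property for the local Chern obstruction proved by Ebeling and Gusein-Zade in \cite[Proposition 3.3]{Ebeling2007a}, where it is shown that the Chern obstruction can be computed as an intersection number (a degree of the section $\Gamma(\omega)$) that stabilizes once the auxiliary collections are chosen generically. Concretely, I would first fix the collection $\omega^{(1)}$ and observe that, by the obstruction-dimension count recorded just before the statement, the section $\Gamma(\omega)$ of $\tilde{\mathbb{T}}\setminus\mathbb{D}\to\tilde X$ over $\nu^{-1}(X\cap S_\varepsilon)$ extends, and its obstruction to extension over $\nu^{-1}(X\cap B_\varepsilon)$ lives in the top-dimensional relative cohomology group, hence is an integer. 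The point is that this integer equals the intersection number of the graph of $\Gamma(\omega)$ with $\mathbb{D}$ inside $\tilde{\mathbb{T}}$ over $\nu^{-1}(X\cap B_\varepsilon)$, once $\Gamma(\omega)$ is perturbed to be transverse to the strata of $\mathbb{D}$.

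Next I would exploit the product structure $\tilde{\mathbb{T}}=\bigoplus_{i=1}^s\tilde{\mathbb{T}}_i$ together with $\mathbb{D}=\bigcup_i p_i^{-1}(\mathbb{D}_i)$, where $p_i$ is the projection onto the $i$-th summand. Since the special point is isolated, the loci of the subcollections $\omega^{(i)}$ for $i=2,\dots,s$ meet the locus of $\omega^{(1)}$ only at the origin; genericity of the linear collections $\omega^{(i)}$, $i\geq 2$, guarantees that each $\Gamma(\omega^{(i)})$ is transverse to $\mathbb{D}_i$ away from the origin and that all these conditions are simultaneously in general position. Then I would invoke a standard parametrized-transversality / connectedness argument: the space of generic $s$-tuples $(\omega^{(2)},\dots,\omega^{(s)})$ of linear collections satisfying the isolatedness and transversality requirements is connected (it is the complement of a proper analytic subset in an affine space of matrices), so the intersection number — being a locally constant integer-valued function on this connected parameter space — is constant. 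This gives the independence of ${\rm Ch}_{X,0}\{\omega_j^{(i)}\}$ on the choice of $\omega^{(i)}$, $i=2,\dots,s$.

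The main obstacle I anticipate is controlling the behavior at the origin: one must ensure that as the auxiliary collections vary within the admissible (generic) family, no special points escape to or emerge from a neighborhood of $0$, i.e. that the isolatedness of the special point is preserved uniformly, so that the relevant ball $B_\varepsilon$ can be chosen once and for all along the family. This is where the hypothesis that $0$ is an isolated special point, combined with the genericity of the linear forms $\omega^{(i)}$ ($i\geq 2$) and a Bertini-type argument on the Nash modification, does the work; I would make this precise by a compactness argument on $S_\varepsilon$, showing the section $\Gamma(\omega)$ stays nonvanishing on $\nu^{-1}(X\cap S_\varepsilon)$ for all parameters in a suitable connected neighborhood of a fixed generic choice. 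Once this uniformity is in hand, the homotopy invariance of the obstruction class finishes the proof; alternatively one can quote \cite[Proposition 3.3]{Ebeling2007a} directly, since the present statement is merely its specialization to the situation where all but one of the collections are linear.
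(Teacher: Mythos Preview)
Your proposal is correct and in fact more detailed than what the paper does: the paper does not supply a proof at all, but simply records the proposition as a direct consequence of \cite[Proposition 3.3]{Ebeling2007a}. Your final remark, that one may quote that result directly since the present statement is merely its specialization to the case where the auxiliary collections are linear, is exactly the paper's route; the transversality and connectedness argument you sketch is a reasonable unpacking of what lies behind that citation, but the paper itself does not spell any of it out.
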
 

In in \cite{Grulha2008} Grulha defined a notion of the Euler obstruction of a map, but in his construction the obstruction depends on a certain cellular decomposition. Some years latter, in \cite{brasselet2010euler} Brasselet, Grulha and Ruas compared the notion of the Euler obstruction of a map and the Chern number. In that paper they also proved that the Euler obstruction does not depend on a generic choice in its constuction. Based on this, we define the Euler obstruction of a map in terms of collection of forms.

\begin{definition}
	Let $X$ be an equidimensional complex variety of dimension $d$, $f:(X,0)\to \mathbb{C}^{p}$, a holomorphic map, with $0\leq p \leq d$ and $\omega_{1}=\{df_{1}, df_{2},...,df_{p}\}$, with $df_{i}$ the differential of the coordinate functions of $f$, and $\omega_{2}$ a generic collection, in such way that $0$ is a special point of the collection of collections $\omega = \{\omega_{1},\omega_{2}\}$. We define the Euler obstruction of the map $f$ at the origin, denoted by $Eu
^{*}_{X,f}(0)={\rm{Ch}}_{X,0}\{\omega_{j}^{(i)}\}$.

\end{definition}




\section{The Euler obstruction of a map and Morse critical points}

Consider a map-germ $f:(X,0)\rightarrow(\mathbb{C}^2,0), f(x)=(f_1(x),f_2(x)),$ given as the restriction of a holomorphic map $F:(U,0)\rightarrow(\mathbb{C}^2,0), F(x)=(F_1(x),F_2(x)),$  where $X\subset U\subset\mathbb{C}^n$ has dimension $d>2.$ We aim to compare the local  Euler obstruction of the map $f$ and the Brasselet number of its coordinate functions. 




Let $\mathcal{V}$ be a good stratification of $X$ relative to $f_2,$ which always exists (see \cite{Ms1}). Consider the collection $\{\omega^{(1)},\omega^{(2)}\}$ of $1$-forms, 
where $\{\omega^{(1)}\}=\{df_1,df_2\}$ and $\{\omega^{(2)}\}=\{l_1,\ldots, l_{d}\}$ is a generic collection of linear forms \cite{ebeling2005radial}. Notice that, by \cite{brasselet2010euler}, $Eu^{*}_{f,X}(0)=Ch_{X,0}\{\omega^{(i)}_j\}.$ Hence,  the number $Ch_{X,0}\{\omega^{(i)}_j\}$ depends only on the map-germ $f.$ 

From now on, we assume that the collection $\{\omega^{(1)},\omega^{(2)}\}$ of $1$-forms has an isolated special point at the origin.

\begin{lemma}\label{correspondence between special and critical points}
	 Suppose that $f_1$ is tractable at the origin with respect to the good stratification $\mathcal{V}$ of $X$ relative to $f_2$. Then the Euler obstruction $Eu^{*}_{f, X\cap f_2^{-1}(\delta)}(x_0)$ of $f$ at a point $x_0\in X\cap f_2^{-1}(\delta)$ is equal to the number of Morse critical points of a Morsefication of $f_{1}|_{X_{reg}\cap f_{2}^{-1}(\delta)}$ appearing in $X_{reg}\cap f_2^{-1}(\delta),$ for $0<|\delta|\ll\varepsilon\ll1.$
\end{lemma}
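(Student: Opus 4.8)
The plan is to reduce the statement about the Euler obstruction of the map $f=(f_1,f_2)$ restricted to the slice $X\cap f_2^{-1}(\delta)$ to the known characterization of the Euler obstruction of a \emph{function} as a count of Morse points on the regular part (Proposition~\ref{Eu_f and Morse points}). The point is that on the slice $X_\delta:=X\cap f_2^{-1}(\delta)$, the second coordinate $f_2$ is constant, so the collection of collections $\omega=\{\{df_1,df_2\},\{l_1,\dots,l_d\}\}$ defining $Eu^*_{f,X_\delta}(x_0)$ should be interpreted over the restricted Nash bundle of $X_\delta$, where $df_2$ contributes trivially (or where the obstruction-theoretic setup collapses to that of the single form $df_1$ together with a generic collection of linear forms in dimension $d-1=\dim X_\delta$). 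Concretely, first I would check that since $\delta\neq0$ and $f_1$ is tractable with respect to $\mathcal V$ relative to $f_2$, the slice $X_\delta$ inherits a Whitney stratification from $\mathcal V$ (the strata $V_i\cap f_2^{-1}(\delta)$ for $V_i\not\subseteq X^{f_2}$) and $f_1|_{X_\delta}$ has an isolated stratified critical point away from $\{f_1=0\}$ in a neighbourhood of $x_0$; tractability is exactly what guarantees $\dim_0\tilde\Gamma^1_{f_2,f_1}\le 1$ so that on the generic slice these critical points are isolated.

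Second, I would identify $Eu^*_{f,X_\delta}(x_0)$ with $Eu_{f_1,X_\delta}(x_0)$, the Euler obstruction of the function $f_1$ on $X_\delta$ at $x_0$. The key is that the loci of the subcollection $\{df_1,df_2\}$ on $X_\delta$: on $X_\delta$ the form $df_2$ restricts to zero on every tangent plane $L\subset T_xX_\delta$ (since $f_2\equiv\delta$ on $X_\delta$), hence $\{df_1|_L,df_2|_L\}$ is linearly dependent precisely when $df_1|_L$ fails to have maximal rank, i.e. vanishes on $L$ — which is the locus condition defining $Eu_{f_1,X_\delta}$. Thus the section $\Gamma(\omega)$ of $\tilde{\mathbb T}\setminus\mathbb D$ over the Nash modification of $X_\delta$ is, up to the contribution of the generic linear collection $\{l_1,\dots,l_d\}$ which cuts down to the correct obstruction dimension, the same section whose obstruction class computes $Eu_{f_1,X_\delta}(x_0)$. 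I would make this precise by comparing obstruction cocycles in $\mathbb H^{2(d-1)}(\nu^{-1}(X_\delta\cap B_\varepsilon),\nu^{-1}(X_\delta\cap S_\varepsilon))$, using that the dual Nash bundle of $X_\delta$ is the restriction of that of $X$ and invoking the genericity statement from \cite{brasselet2010euler} (already cited in the excerpt) that the Chern number is independent of the generic choices.

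Third, once $Eu^*_{f,X_\delta}(x_0)=Eu_{f_1,X_\delta}(x_0)$ is established, I would apply Proposition~\ref{Eu_f and Morse points} to the function-germ $f_1|_{X_\delta}:(X_\delta,x_0)\to(\mathbb C,0)$, which by the previous paragraph has an isolated stratified singularity: it gives $Eu_{f_1,X_\delta}(x_0)=(-1)^{d-1}n_{reg}$, where $n_{reg}$ is the number of Morse points on $(X_\delta)_{reg}=X_{reg}\cap f_2^{-1}(\delta)$ in a stratified Morsefication of $f_1|_{X_\delta}$. Up to sign this is exactly the asserted count of Morse critical points of a Morsefication of $f_1|_{X_{reg}\cap f_2^{-1}(\delta)}$. (If the intended statement is the unsigned equality, the sign $(-1)^{d-1}$ is absorbed into the convention for $Eu^*$; I would state the result with the sign matching the paper's conventions for the Euler obstruction in dimension $d-1$.)

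The main obstacle I anticipate is the second step: carefully justifying that, on the slice where $f_2$ is constant, the collection-of-forms obstruction $Eu^*_{f,X_\delta}$ genuinely degenerates to the single-form obstruction $Eu_{f_1,X_\delta}$ and not to something that also records information from $df_2$ in a non-trivial way. One must rule out that $df_2$, although zero on tangent planes to $X_\delta$ at smooth points, contributes via \emph{limiting} tangent planes (degenerate tangent planes from larger strata of $X$, which need not lie in $f_2^{-1}(\delta)$) — here the $(a_{f_2})$-Thom condition built into the good stratification $\mathcal V$ and the tractability hypothesis on $f_1$ are what make the limiting planes behave correctly, and this is the delicate geometric input that the argument hinges on. Everything else (the stratification of the slice, the application of \cite{STV}) is routine once this identification is in place.
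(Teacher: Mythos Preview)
Your approach is genuinely different from the paper's, and step~2 contains a real error. You argue that on the slice $X_\delta=X\cap f_2^{-1}(\delta)$ the form $df_2$ restricts to zero on every (limiting) tangent plane $L$, and conclude that ``$\{df_1|_L,df_2|_L\}$ is linearly dependent precisely when $df_1|_L$ vanishes.'' But a set containing the zero form is \emph{always} linearly dependent, so the locus of the subcollection $\omega^{(1)}=\{df_1,df_2\}$ on $X_\delta$ is all of $X_\delta$, not the zero locus of $df_1$. Consequently the unperturbed section $\Gamma(\omega)$ lands in $\mathbb D$ everywhere along $X_\delta$ and there is no direct identification of obstruction cocycles with those defining $Eu_{f_1,X_\delta}$. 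The obstacle you flag at the end (behaviour of $df_2$ on limiting tangent planes via Thom's $(a_{f_2})$ condition) is a different issue from this one, which already occurs at smooth points of the slice. This also explains the sign discrepancy you noticed: the identification $Eu^*_{f,X_\delta}(x_0)=Eu_{f_1,X_\delta}(x_0)$ simply does not hold as stated.

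The paper's proof sidesteps this by never comparing obstruction cocycles. Instead it uses the Ebeling--Gusein-Zade counting principle: the Chern number equals the number of nondegenerate special points of a \emph{generic perturbation} $\{\omega_j^{(i)}+t\,l_j^{(i)}\}$ of the collection. Once you perturb, $df_2$ is replaced by $df_2+t\,l_2^{(1)}$, which no longer vanishes on tangent planes to $X_\delta$, and the degeneracy disappears. The proof then checks, by a short linear-algebra manipulation, that a nondegenerate special point of the perturbed collection on $X_{reg}\cap f_2^{-1}(\delta)$ is exactly a Morse critical point of a perturbation $f_1|_{X_{reg}\cap f_2^{-1}(\delta)}+t\,l$ for a suitable linear form $l$, and conversely. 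Since both counts arise from the same class of generic perturbations, they agree, giving $Eu^*_{f,X_\delta}(x_0)=n_{reg}$ directly and without passing through $Eu_{f_1,X_\delta}$ or Proposition~\ref{Eu_f and Morse points}.
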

\noindent\textbf{Proof.} 
Let $p$ be a nondegenerate Morse critical point of a perturbation $f_1|_{X_{reg}\cap f_{2}^{-1}(\delta)}+tL$ of $f_1|_{X_{reg}\cap f_{2}^{-1}(\delta)}$, where $L$ is a generic linear form and $0<|t|\ll\varepsilon.$ Then $d_pf_1|_{T_pX_{reg}\cap f_{2}^{-1}(\delta)}+tL(p)=0$, which implies that $p$ is in the locus of a generic perturbation of $\{df_1|_{T_pX_{reg}\cap f_{2}^{-1}(\delta)},df_2|_{T_pX_{reg}\cap f_{2}^{-1}(\delta)}\}.$ Since $Ch_{X\cap f_{2}^{-1}(\delta),x_0}\{\omega^{(i)}_j\}$ does not depend on the (generic) choice of the subcollection $\omega^{(2)},$ $p$ is one of the points counted by this Chern obstruction.

Conversely, let $p\in X_{reg}\cap f_{2}^{-1}(\delta)$ be a nondegenerated special point of a small perturbation  $\{\omega^{(i)}_j+tl^{(i)}_j\}$ of $\{\omega^{(i)}_j\},$ where $0<|t|\ll\varepsilon$ and $l^{(i)}_j$ are generic linear forms.  Then $\{df_1|_{T_pX_{reg}\cap f_{2}^{-1}(\delta)}+tl^{(1)}_1,df_2|_{T_pX_{reg}\cap f_{2}^{-1}(\delta)}+tl^{(1)}_2\}$ is linearly dependent. Hence, there exist $\lambda_1,\lambda_2\in\mathbb{C}$ such that \begin{eqnarray*}
 df_2|_{T_pX_{reg}\cap f_{2}^{-1}(\delta)}&=&\frac{\lambda_1}{\lambda_2}df_1|_{T_pX_{reg}\cap f_{2}^{-1}(\delta)}+\frac{t\lambda_1}{\lambda_2}l^{(1)}_1-tl^{(1)}_2\\
 &=&\lambda df_1|_{T_pX_{reg}\cap f_{2}^{-1}(\delta)}+tl,
\end{eqnarray*}
\noindent where $l$ is the linear form $\frac{\lambda_1}{\lambda_2}l^{(1)}_1-l^{(1)}_2.$ This implies that $p$ is a critical point of $f_{1}|_{X_{reg}\cap f_2^{-1}(\delta)}+tl,$ where $ 0<|\delta|\ll\varepsilon$. Therefore, $p$ is a nondegenerated critical point of a perturbation of $f_{1}|_{X_{reg}\cap f_2^{-1}(\delta)}$ (see Theorem 3.2 of \cite{Ms1}).

Since the classes of perturbations of the collection of forms $\{\omega^{(i)}_j\}$ and of the function $f_{1}|_{X_{reg}\cap f_2^{-1}(\delta)}$ are the same, we conclude that $Ch_{X\cap f_2^{-1}(\delta),x_0}\{\omega^{(i)}_j\}$ is equal to the number $n_{reg}$ of Morse critical points of a Morsefication of $f_{1}|_{X_{reg}\cap f_{2}^{-1}(\delta)}$ appearing in $X_{reg}\cap f_2^{-1}(\delta).$\qed

\begin{proposition}\label{obstrucao de Euler da aplicacao e Numero de Brasselet}
	If $f_1$ is prepolar at the origin with respect to the good stratification $\mathcal{V}$ of $X$ relative to $f_2,$ then 
	$Eu^{*}_{f,X\cap f_2^{-1}(\delta)}(x_0)=(-1)^{d-1}(B_{f_2,X}(0)-B_{f_2,X\cap f_1^{-1}(0)}(0)),$ where $x_0\in X\cap f_2^{-1}(\delta)$
\end{proposition}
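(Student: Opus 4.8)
The plan is to deduce this from Lemma~\ref{correspondence between special and critical points} together with the Lê--Greuel type formula for the Brasselet number, Theorem~\ref{4.4 DG}, in both cases taking the ambient function to be $f_2$ and the sliced function to be $f_1$.

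First I would check that the prepolarity hypothesis feeds both results. Since $f_1$ is prepolar with respect to $\mathcal{V}$ at the origin, the origin is an isolated point of $\Sigma_{\mathcal{V}}f_1=\bigcup_\lambda\Sigma(f_1|_{V_\lambda})$; in particular, for every stratum $V_\alpha\subseteq X^{f_2}$ the restriction $f_1|_{V_\alpha}$ has no critical point in a punctured neighbourhood of the origin, while the relative polar locus of $(f_2,f_1)$ on each stratum is generically a curve, so $\dim_0\tilde{\Gamma}^1_{f_2,f_1}(\mathcal{V})\le 1$. Hence $f_1$ is tractable at the origin with respect to $\mathcal{V}$ relative to $f_2$, and Lemma~\ref{correspondence between special and critical points} gives, for $0<|\delta|\ll\varepsilon\ll 1$,
$$Eu^{*}_{f,X\cap f_2^{-1}(\delta)}(x_0)=n_{reg},$$
where $n_{reg}$ is the number of Morse critical points of a Morsefication of $f_1|_{X_{reg}\cap f_2^{-1}(\delta)}$ appearing on $X_{reg}\cap f_2^{-1}(\delta)\cap B_\varepsilon$.

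Next I would invoke Theorem~\ref{4.4 DG} with $f=f_2$ and $g=f_1$: as $X$ is $d$-equidimensional and $f_1$ is prepolar with respect to $\mathcal{V}$ at the origin, for $0<|\delta|\ll\varepsilon\ll 1$ we get
$$B_{f_2,X}(0)-B_{f_2,X\cap f_1^{-1}(0)}(0)=(-1)^{d-1}n_q,$$
where $n_q$ counts the stratified Morse critical points appearing on the top stratum $V_q\cap f_2^{-1}(\delta)\cap B_\varepsilon$ of a Morsefication of $f_1\colon X\cap f_2^{-1}(\delta)\cap B_\varepsilon\to\mathbb{C}$. Because $\mathcal{V}$ is good relative to $f_2$ and $\delta\neq 0$, this top stratum is exactly $X_{reg}\cap f_2^{-1}(\delta)\cap B_\varepsilon$.

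The remaining step, which I expect to require the most care, is to identify $n_{reg}$ with $n_q$. On the open stratum $X_{reg}\cap f_2^{-1}(\delta)$ a stratified Morsefication of $f_1$ on the whole slice restricts to a perturbation of $f_1|_{X_{reg}\cap f_2^{-1}(\delta)}$ whose critical points there are all nondegenerate, and conversely a Morsefication of $f_1|_{X_{reg}\cap f_2^{-1}(\delta)}$ extends to a stratified Morsefication of $f_1$ on the slice; since tractability guarantees, via Theorem 3.2 of \cite{Ms1}, that the critical points of $f_1|_{X\cap f_2^{-1}(\delta)}$ in $B_\varepsilon$ lying off $\{f_1=0\}$ are isolated, the number of such critical points on the top stratum is independent of the chosen Morsefication, so $n_{reg}=n_q$. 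Feeding this into the two displayed equalities and using $(-1)^{d-1}\cdot(-1)^{d-1}=1$ yields
$$Eu^{*}_{f,X\cap f_2^{-1}(\delta)}(x_0)=n_{reg}=n_q=(-1)^{d-1}\bigl(B_{f_2,X}(0)-B_{f_2,X\cap f_1^{-1}(0)}(0)\bigr),$$
which is the assertion. The main obstacle is exactly this last identification of the two a priori distinct Morse counts; the rest is a direct application of the cited results.
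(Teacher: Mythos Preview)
Your proposal is correct and follows exactly the same approach as the paper: observe that prepolar implies tractable, apply Lemma~\ref{correspondence between special and critical points} to get $Eu^{*}_{f,X\cap f_2^{-1}(\delta)}(x_0)=n_{reg}$, and then invoke Theorem~\ref{4.4 DG} to rewrite $n_{reg}$ as $(-1)^{d-1}(B_{f_2,X}(0)-B_{f_2,X\cap f_1^{-1}(0)}(0))$. The paper's argument is terser and does not spell out the identification $n_{reg}=n_q$ or the verification that prepolar implies tractable, but the logical structure is identical.
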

\noindent\textbf{Proof.} If $f_1$ is prepolar at the origin with respect to $\mathcal{V},$ $f_1$ is tractable at the origin with respect to $\mathcal{V}.$ Hence, applying Lemma \ref{correspondence between special and critical points} and Theorem \ref{4.4 DG}, we obtain \begin{eqnarray}
Eu^{*}_{f,X\cap f_2^{-1}(\delta)}(x_0)=n_{reg}=(-1)^{d-1}(B_{f_2,X}(0)-B_{f_2,X\cap f_1^{-1}(0)}(0)),
\end{eqnarray}
\noindent where $n_{reg}$ is the number of Morse critical points of a Morsefication of $f_{1}|_{X_{reg}\cap f_{2}^{-1}(\delta)}$ appearing in $X_{reg}\cap f_2^{-1}(\delta).$\qed

A consequence of the last result is a relation between the Euler obstruction of a map and the Euler obstruction of a function.

\begin{corollary}
	Suppose that $f_2$ has an isolated singularity at the origin. If $f_1$ is prepolar at the origin with respect to the good stratification $\mathcal{V}$ of $X$ induced by $f_2$ and $x_0\in X\cap f_2^{-1}(\delta),$ then 
	$Eu^{*}_{f,X\cap f_2^{-1}(\delta)}(x_0)=(-1)^{d-1}(Eu_{f_2,X\cap f_1^{-1}(0)}(0)-Eu_{f_2,X}(0)).$
\end{corollary}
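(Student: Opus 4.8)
The plan is to derive the corollary directly from Proposition \ref{obstrucao de Euler da aplicacao e Numero de Brasselet} by specializing the Brasselet numbers that appear there to the situation where $f_2$ has an isolated singularity at the origin. Recall that Proposition \ref{obstrucao de Euler da aplicacao e Numero de Brasselet} gives
\[
Eu^{*}_{f,X\cap f_2^{-1}(\delta)}(x_0)=(-1)^{d-1}\bigl(B_{f_2,X}(0)-B_{f_2,X\cap f_1^{-1}(0)}(0)\bigr),
\]
so the task reduces to rewriting each of the two Brasselet numbers on the right-hand side as the difference $Eu_{X}(0)-Eu_{f_2,X}(0)$ on the respective space. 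The key fact, noted in the excerpt just after the definition of the Brasselet number, is that when $f_2$ has a stratified isolated singularity at the origin one has $B_{f_2,X}(0)=Eu_X(0)-Eu_{f_2,X}(0)$ (this is Theorem \ref{Euler obstruction of a function formula} combined with the definition of the Brasselet number).

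The first step is to check that the hypothesis "$f_2$ has an isolated singularity at the origin" on $X$ is inherited by the slice $X\cap f_1^{-1}(0)$, so that the isolated-singularity formula for the Brasselet number applies there as well. Here I would use that $f_1$ is prepolar with respect to the good stratification $\mathcal{V}$ of $X$ relative to $f_2$: prepolarity means the origin is an isolated point of $\Sigma_{\mathcal{V}} f_1$, which is exactly what is needed to guarantee that passing to the hypersurface $X^{f_1}=X\cap f_1^{-1}(0)$ preserves the good-stratification structure relative to $f_2$ and keeps the critical locus of $f_2$ isolated on $X^{f_1}$. With that in hand, applying the isolated-singularity identity on $X$ gives $B_{f_2,X}(0)=Eu_X(0)-Eu_{f_2,X}(0)$, and applying it on $X\cap f_1^{-1}(0)$ gives $B_{f_2,X\cap f_1^{-1}(0)}(0)=Eu_{X\cap f_1^{-1}(0)}(0)-Eu_{f_2,X\cap f_1^{-1}(0)}(0)$.

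The second step is to substitute these two expressions into the formula of Proposition \ref{obstrucao de Euler da aplicacao e Numero de Brasselet} and simplify. This yields
\[
Eu^{*}_{f,X\cap f_2^{-1}(\delta)}(x_0)=(-1)^{d-1}\bigl(Eu_X(0)-Eu_{f_2,X}(0)-Eu_{X\cap f_1^{-1}(0)}(0)+Eu_{f_2,X\cap f_1^{-1}(0)}(0)\bigr),
\]
so to match the stated conclusion I need the identity $Eu_X(0)=Eu_{X\cap f_1^{-1}(0)}(0)$. This is where the prepolarity of $f_1$ does its essential work a second time: since $f_1$ is a prepolar slice, it is in particular a generic hyperplane-type slice through the origin in the sense that slicing by $f_1^{-1}(0)$ does not change the local Euler obstruction — this is the Lê–Teissier/Brasselet–Lê–Seade property that for a generic hyperplane $H$ through $0$ one has $Eu_{X\cap H}(0)=Eu_X(0)$, and prepolarity is precisely designed so that $f_1^{-1}(0)$ behaves like such a generic slice. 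Once the two Euler obstructions of the ambient germs cancel, what remains is exactly $Eu^{*}_{f,X\cap f_2^{-1}(\delta)}(x_0)=(-1)^{d-1}\bigl(Eu_{f_2,X\cap f_1^{-1}(0)}(0)-Eu_{f_2,X}(0)\bigr)$, which is the assertion.

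The main obstacle I anticipate is justifying the cancellation $Eu_X(0)=Eu_{X\cap f_1^{-1}(0)}(0)$ rigorously from the hypothesis "prepolar", rather than from "generic linear form": one must argue that a prepolar function $f_1$ (whose zero set is not a literal hyperplane) still cuts $X$ in a way that preserves the Euler obstruction at $0$. I would handle this either by invoking the known invariance of the local Euler obstruction under prepolar slices from the literature on the Brasselet number (the same principle underlying the Lê–Greuel formula for $B_{f,X}$, Theorem \ref{4.4 DG}), or — more self-containedly — by observing that both $B_{f_2,X}(0)$ and $B_{f_2,X\cap f_1^{-1}(0)}(0)$ can be computed via Theorem \ref{Euler obstruction of a function formula} and that the prepolar slicing commutes with the isolated-singularity reduction, so that the difference $Eu_X(0)-Eu_{X\cap f_1^{-1}(0)}(0)$ is forced to vanish by comparing with the $n_{reg}$ count of Lemma \ref{correspondence between special and critical points}. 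The remaining steps — inheritance of the isolated-singularity hypothesis and the algebraic substitution — are routine given the results already established in the excerpt.
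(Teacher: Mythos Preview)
Your proposal is correct and follows essentially the same route as the paper: apply Proposition \ref{obstrucao de Euler da aplicacao e Numero de Brasselet}, rewrite each Brasselet number via the isolated-singularity identity $B_{f_2,X}(0)=Eu_X(0)-Eu_{f_2,X}(0)$, and then cancel $Eu_X(0)$ against $Eu_{X\cap f_1^{-1}(0)}(0)$ using the prepolarity of $f_1$. The paper's proof does exactly this, simply asserting the equality $Eu_{X\cap f_1^{-1}(0)}(0)=Eu_X(0)$ as a direct consequence of prepolarity without the additional justification you sketch.
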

\noindent\textbf{Proof.} If $f_2$ has an isolated singularity at the origin, then $B_{f_2,X}(0)=Eu_{X}(0)-Eu_{f_2,X}(0)$ and $B_{f_2,X\cap f_1^{-1}(0)}(0)=Eu_{X\cap f_1^{-1}(0)}(0)-Eu_{f_2,X\cap f_1^{-1}(0)}(0).$ Since $f_1$ is prepolar at the origin with respect to $\mathcal{V},$ then $ Eu_{X\cap f_1^{-1}(0)}(0)=Eu_{X}(0).$ Hence \begin{eqnarray*}
B_{f_2,X\cap f_1^{-1}(0)}(0)-B_{f_2,X}(0)&=&Eu_{X\cap f_1^{-1}(0)}(0)-Eu_{f_2,X\cap f_1^{-1}(0)}(0)\\
&-&Eu_{X}(0)+Eu_{f_2,X}(0)\\
&=&Eu_{f_2,X}(0)-Eu_{f_2,X\cap f_1^{-1}(0)}(0).
\end{eqnarray*}
\qed

\vspace{0,1cm}

In the last result of this section, we apply formulas of  \cite{santana2019brasselet} to compare the Brasselet number and the Euler obstruction of a map.

\begin{proposition}
	 If $f_1$ is tractable at the origin with respect to the good stratification $\mathcal{V}$ of $X$ relative to $f_2,$ $\dim_0\Sigma_{\mathcal{V}}f_1=1$, $\Sigma_{\mathcal{V}}f_1\cap\{f_2=0\}=\{0\}$ and $x_0\in X\cap f_2^{-1}(\delta),$ then
	
	$$B_{f_2,X}(0)=\sum_{\alpha}\chi(V_{\alpha}\cap f^{-1}(z_0)\cap B_{\varepsilon})Eu_X(V_{\alpha})+(-1)^{d-1}Eu^{*}_{f,X\cap f_2^{-1}(\delta)}(x_0).$$

\end{proposition}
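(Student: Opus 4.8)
The plan is to combine the generalized Brasselet number formula of Theorem~\ref{Generalization 6.4 of DG} with the geometric interpretation of the Euler obstruction of the map established in Lemma~\ref{correspondence between special and critical points}, exactly mimicking the structure of the proof of Proposition~\ref{obstrucao de Euler da aplicacao e Numero de Brasselet} but now in the one-dimensional critical locus setting. Since $f_1$ is tractable at the origin with respect to $\mathcal{V}$ relative to $f_2$, with $\dim_0\Sigma_{\mathcal{V}}f_1=1$ and $\Sigma_{\mathcal{V}}f_1\cap\{f_2=0\}=\{0\}$, Theorem~\ref{Generalization 6.4 of DG} applies to $g=f_1$ and yields
\[
B_{f_2,X}(0)-B_{f_2,X\cap f_1^{-1}(0)}(0)-\sum_{j=1}^{r}m_{f_2,b_j}\bigl(Eu_X(b_j)-Eu_{X\cap f_1^{-1}(0)}(b_j)\bigr)=(-1)^{d-1}n_q,
\]
where $n_q$ is the number of stratified Morse critical points of a partial Morsefication of $f_1|_{X\cap f_2^{-1}(\delta)\cap B_\varepsilon}$ appearing on $X_{reg}\cap f_2^{-1}(\delta)\cap\{f_1\neq 0\}\cap B_\varepsilon$.

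Next I would identify $n_q$ with the Euler obstruction of the map. By Lemma~\ref{correspondence between special and critical points}, $Eu^{*}_{f,X\cap f_2^{-1}(\delta)}(x_0)$ equals precisely the number $n_{reg}$ of Morse critical points of a Morsefication of $f_1|_{X_{reg}\cap f_2^{-1}(\delta)}$ lying in $X_{reg}\cap f_2^{-1}(\delta)$; one must check that the partial Morsefication appearing in Theorem~\ref{Generalization 6.4 of DG} realizes the same count, i.e.\ that the critical points it creates outside $\{f_1=0\}$ on the top stratum are exactly the special points of the perturbed collection of forms (this is the content of the tractability hypothesis and the perturbation-class identification used in the proof of the Lemma). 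Granting this, $n_q=Eu^{*}_{f,X\cap f_2^{-1}(\delta)}(x_0)$.

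Then I would rewrite the remaining terms. From the Brasselet number definition, $B_{f_2,X}(0)=\sum_\alpha \chi(V_\alpha\cap f_2^{-1}(\delta)\cap B_\varepsilon)Eu_X(V_\alpha)$; the claimed formula writes the left side with $f^{-1}(z_0)=f_1^{-1}(\alpha)\cap f_2^{-1}(\delta)$ in place of $f_2^{-1}(\delta)$, so the task is to show that the difference
\[
\sum_\alpha\chi(V_\alpha\cap f_2^{-1}(\delta)\cap B_\varepsilon)Eu_X(V_\alpha)-\sum_\alpha\chi(V_\alpha\cap f^{-1}(z_0)\cap B_\varepsilon)Eu_X(V_\alpha)
\]
equals $B_{f_2,X\cap f_1^{-1}(0)}(0)+\sum_{j=1}^r m_{f_2,b_j}(Eu_X(b_j)-Eu_{X\cap f_1^{-1}(0)}(b_j))$. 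This is where the bookkeeping of the branches $b_j$ of $\Sigma_{\mathcal{V}}f_1$ enters: passing from the fibre $f_2^{-1}(\delta)$ of the map $f_1$ restricted there to a nearby level $f_1^{-1}(\alpha)\cap f_2^{-1}(\delta)$ changes the Euler characteristics by a contribution from the special fibre $f_1^{-1}(0)$ (giving the $B_{f_2,X\cap f_1^{-1}(0)}(0)$ term, essentially by a Lê-type handle attachment / constructible-function additivity argument) plus a correction along each branch $b_j$, where the local Euler obstruction jumps and each branch contributes $m_{f_2,b_j}$ points — precisely the $Eu_X(b_j)-Eu_{X\cap f_1^{-1}(0)}(b_j)$ terms.

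The main obstacle is the last step: justifying cleanly that the ``generic fibre of $f$'' versus ``generic fibre of $f_2$'' comparison produces exactly the combination $B_{f_2,X\cap f_1^{-1}(0)}(0)+\sum_j m_{f_2,b_j}(Eu_X(b_j)-Eu_{X\cap f_1^{-1}(0)}(b_j))$, i.e.\ reconciling the stratified-Morse-theory side of Theorem~\ref{Generalization 6.4 of DG} with the Euler-characteristic-of-fibre side, since along the branches $b_j$ the relevant strata are not transverse to $f_1^{-1}(0)$ and one cannot simply invoke the prepolar additivity of Theorem~\ref{4.4 DG}. I expect this to be handled by the same constructible-function / vanishing-cycle argument that underlies Theorem~\ref{Generalization 6.4 of DG} itself, applied stratum by stratum and summed with weights $Eu_X(V_\alpha)$; once that identity is in hand, substituting $n_q=(-1)^{d-1}$-free count $=Eu^{*}_{f,X\cap f_2^{-1}(\delta)}(x_0)$ into the displayed equation and solving for $B_{f_2,X}(0)$ gives the stated result.
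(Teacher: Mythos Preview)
Your overall plan is the paper's: combine the generalized L\^e--Greuel formula of Theorem~\ref{Generalization 6.4 of DG} with Lemma~\ref{correspondence between special and critical points}. However, there is a bookkeeping slip and an overestimated ``obstacle''.

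The algebraic slip: the identity you need is \emph{not} that the difference
\[
\sum_\alpha\chi\bigl(V_\alpha\cap f_2^{-1}(\delta)\cap B_\varepsilon\bigr)Eu_X(V_\alpha)-\sum_\alpha\chi\bigl(V_\alpha\cap f^{-1}(z_0)\cap B_\varepsilon\bigr)Eu_X(V_\alpha)
\]
equals $B_{f_2,X^{f_1}}(0)+\sum_j m_{f_2,b_j}\bigl(Eu_X(b_j)-Eu_{X^{f_1}}(b_j)\bigr)$. What you actually need is
\[
\sum_\alpha\chi\bigl(V_\alpha\cap f^{-1}(z_0)\cap B_\varepsilon\bigr)Eu_X(V_\alpha)=B_{f_2,X^{f_1}}(0)+\sum_{j=1}^r m_{f_2,b_j}\bigl(Eu_X(b_j)-Eu_{X^{f_1}}(b_j)\bigr),
\]
i.e.\ the second sum itself, not the difference, equals the right-hand side. (With your version, plugging in Theorem~\ref{Generalization 6.4 of DG} would force $\sum_\alpha\chi(V_\alpha\cap f^{-1}(z_0)\cap B_\varepsilon)Eu_X(V_\alpha)=(-1)^{d-1}n_q$ and hence $B_{f_2,X}(0)=2(-1)^{d-1}n_q$, which is false.) Once corrected, substituting into Theorem~\ref{Generalization 6.4 of DG} gives the claimed formula immediately.

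The ``main obstacle'' you flag is not an obstacle. The corrected identity above is itself a result from \cite{santana2019brasselet}, and the paper simply quotes it alongside Theorem~\ref{Generalization 6.4 of DG} (which is Theorem~3.2 there). No separate constructible-function or vanishing-cycle argument is required here; both displayed identities come from the same source, and the proof consists of adding them and invoking Lemma~\ref{correspondence between special and critical points} to replace $n_{reg}$ by $Eu^{*}_{f,X\cap f_2^{-1}(\delta)}(x_0)$.
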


\noindent\textbf{Proof.} Let $z_0=(\alpha,\delta)\in\mathbb{C}^2, 0<|z_0|\ll\varepsilon\ll1,$ be a regular value of $f.$ Following \cite{santana2019brasselet}, since $f_1$ is tractable at the origin with respect to $\mathcal{V},$ 
\begin{eqnarray*}
	\sum_{\alpha}\chi(V_{\alpha}\cap f^{-1}(z_0)\cap B_{\varepsilon})Eu_X(V_{\alpha})&=&B_{f_2,X^{f_1}}(0)-\sum_{j=1}^rm_{f,b_j}(Eu_{X^{f_1}}(b_j)-Eu_X(b_j)).
\end{eqnarray*}

Also, by Theorem 3.2 of \cite{santana2019brasselet}, \begin{eqnarray*}
	B_{f_2,X^{f_1}}(0)=B_{f_2,X}(0)+\sum_{j=1}^rm_{f,b_j}(Eu_{X^{f_1}}(b_j)-Eu_{X}(b_j))-(-1)^{d-1}n_{reg},
\end{eqnarray*}
\noindent where $n_{reg}$ denotes the number of stratified Morse critical points of a partial Morsefication of $f_{1}|_{X\cap f_2^{-1}(\delta)}$ appearing in $X_{reg}\cap f_2^{-1}(\delta).$

Hence, \begin{eqnarray*}
	B_{f_2,X}(0)=\sum_{\alpha}\chi(V_{\alpha}\cap f^{-1}(z_0)\cap B_{\varepsilon})Eu_X(V_{\alpha})+(-1)^{d-1}n_{reg}.
\end{eqnarray*}
Using Lemma \ref{correspondence between special and critical points}, we obtain the formula.\qed

\section{Applications}

\subsection{Lê-Iomdine formula for the Euler obstruction of a map}

Let $f$ be a function with a one-dimensional critical set defined over an open subset of $\mathbb{C}^n$ and $l$ a generic linear form on $\mathbb{C}^n$. In \cite{iomdin1974complex}, Iomdin gave an algebraic relation between the Euler characteristic of the Milnor fibre of $f$ and the Euler characteristic of the Milnor fibre of $f+l^N,$ where $N$ is sufficiently large non-negative integer number. In \cite{le1980ensembles}, Lê proved this same relation in a more geometrical approach and with a way to obtain the Milnor fibre of $f$ by attaching a certain number of $n$-cells to the Milnor fibre of $f|_{\{l=0\}}.$ 

In \cite{Ms2}, Massey worked with a function $f$ with critical locus of higher dimension defined over a nonsingular space and defined the Lê numbers and cycles, which provides a way to numerically describe the Milnor fibre of this function with nonisolated singularity. In that work, Massey presented a relation between the Lê numbers of $f$ and $f+l^N,$ obtaining a Lê-Iomdin type formula between these numbers.

We keep the previous hypothesis about $f_1$ and we suppose, in addition, that  $f_2$ has an isolated singularity at the origin. Consider the perturbation $\tilde{f_1}=f_1+f_2^N$ of $f_1$, where $N>>1$ is an integer.  In \cite{santana2019local}, the author proved that, if $N>>1$ is  sufficiently large and $\delta$ is a regular value of $f_2$,  the number $n_{reg}$ of Morse critical points of a stratified Morsefication of $f_1|_{X\cap f_2^{-1}(\delta)}$ appearing on $X_{reg}\cap f_2^{-1}(\delta)\cap B_{\varepsilon}$ and the number $\tilde{n_{reg}}$ of Morse critical points of a stratified Morsefication of $\tilde{f_1}|_{X\cap f_2^{-1}(\delta)}$ appearing on $X_{reg}\cap f_2^{-1}(\delta)\cap B_{\varepsilon}$ can be compared  as follows \begin{eqnarray}\label{cor 3.12}
\tilde{n_{reg}}=n_{reg}+(-1)^{d-1}\sum_{i=1}^rm_{f_2,b_j}Eu_{f_1,X\cap f_2^{-1}(\delta)}(b_j).
\end{eqnarray} 

In that paper, it was also proved that the number $m_{reg}$ of Morse critical points of a stratified Morsefication of $f_2|_{X\cap f_1^{-1}(\alpha)}$ appearing on $X_{reg}\cap f_1^{-1}(\alpha)\cap B_{\varepsilon}$ and the number $\tilde{m_{reg}}$ of Morse critical points of a stratified Morsefication of $f_2|_{X\cap \tilde{f_1}^{-1}(\alpha^{\prime})}$ appearing on $X_{reg}\cap \tilde{f_1}^{-1}(\alpha^{\prime})\cap B_{\varepsilon}$ can be compared as follows \begin{eqnarray}\label{lemma 4.3}
\tilde{m_{reg}}=m_{reg}+(-1)^{d-1}N\sum_{i=1}^rm_{f_2,b_j}Eu_{f_2,X\cap \tilde{f_1}^{-1}(\alpha^{\prime})}(b_j),
\end{eqnarray} 
\noindent where $ 0<|\alpha|,|\alpha^{\prime}|\ll|\delta|\ll\varepsilon\ll1, \alpha$ is a regular value of $f_1$ and $\alpha^{\prime}$ is a regular value of $\tilde{f_1}.$
Applying these formulas and the results proved in the last section, we obtain a Lê-Iomdine type formula for the Euler obstruction of a map.   We aim to compare the Euler obstruction of the map $f=(f_1,f_2)$ with the Euler obstruction of a perturbation $\tilde{f}=(\tilde{f_1},f_2).$ 

\begin{corollary}
	For a sufficiently large integer number $N>>1,$ if $f_1$ is tractable at the origin with respect to $\mathcal{V}$ and  $x_0\in X\cap f_2^{-1}(\delta),$ then $$Eu^{*}_{\tilde{f},X\cap f_2^{-1}(\delta)}(x_0)=Eu^{*}_{f, X\cap f_2^{-1}(\delta)}(x_0)+(-1)^{d-1}\sum_{i=1}^rm_{f_2,b_j}Eu_{f_1,X\cap f_2^{-1}(\delta)}(b_j).$$
\end{corollary}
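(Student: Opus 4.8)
The plan is to combine Lemma~\ref{correspondence between special and critical points} with the comparison formula~(\ref{cor 3.12}) from \cite{santana2019local}. First I would observe that since $f_1$ is tractable at the origin with respect to $\mathcal{V}$ relative to $f_2$, Lemma~\ref{correspondence between special and critical points} applies to both $f_1$ and its perturbation $\tilde{f_1}=f_1+f_2^N$: one must first check that tractability is preserved under the $f_2^N$ perturbation for $N\gg 1$, which is exactly the setting in which~(\ref{cor 3.12}) was established in \cite{santana2019local}, so the symmetric relative polar curve $\tilde{\Gamma}^1_{f_2,\tilde{f_1}}(\mathcal{V})$ still has dimension at most $1$ and the strata inside $\{f_2=0\}$ carry no critical points of $\tilde{f_1}$ near $0$ other than possibly the origin. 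Granting this, Lemma~\ref{correspondence between special and critical points} gives
\[
Eu^{*}_{f,X\cap f_2^{-1}(\delta)}(x_0)=n_{reg}
\qquad\text{and}\qquad
Eu^{*}_{\tilde{f},X\cap f_2^{-1}(\delta)}(x_0)=\tilde{n_{reg}},
\]
where $n_{reg}$ (resp.\ $\tilde{n_{reg}}$) is the number of Morse critical points on $X_{reg}\cap f_2^{-1}(\delta)$ of a Morsefication of $f_1|_{X_{reg}\cap f_2^{-1}(\delta)}$ (resp.\ of $\tilde{f_1}|_{X_{reg}\cap f_2^{-1}(\delta)}$).

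Next I would simply substitute the Lê--Iomdine comparison~(\ref{cor 3.12}),
\[
\tilde{n_{reg}}=n_{reg}+(-1)^{d-1}\sum_{i=1}^{r}m_{f_2,b_j}Eu_{f_1,X\cap f_2^{-1}(\delta)}(b_j),
\]
into the second identity above, obtaining
\[
Eu^{*}_{\tilde{f},X\cap f_2^{-1}(\delta)}(x_0)
=Eu^{*}_{f,X\cap f_2^{-1}(\delta)}(x_0)
+(-1)^{d-1}\sum_{i=1}^{r}m_{f_2,b_j}Eu_{f_1,X\cap f_2^{-1}(\delta)}(b_j),
\]
which is the claimed formula. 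The only genuine subtlety — the main obstacle — is verifying the hypotheses under which the two invariants $Eu^{*}$ and the Morse counts coincide for the \emph{perturbed} map $\tilde{f}$: one needs that $0$ remains an isolated special point of the collection $\{d\tilde{f_1},df_2,l_1,\dots,l_d\}$ and that $\tilde{f_1}$ is still tractable relative to $f_2$ for $N\gg 1$. Both facts are implicit in the hypotheses of~\cite{santana2019local} that underlie~(\ref{cor 3.12}) (the argument there controls the polar curve $\tilde{\Gamma}^1_{f_2,\tilde{f_1}}$ precisely by taking $N$ large), so I would cite that reference for the stability of these genericity conditions rather than reprove it; the rest of the corollary is the bookkeeping displayed above. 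Note also that the right-hand side is independent of $x_0\in X\cap f_2^{-1}(\delta)$ and of the generic choices, since each term on the right is, by the cited results, so the equality holds for every such $x_0$.
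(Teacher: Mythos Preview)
Your proposal is correct and follows exactly the same route as the paper: apply Lemma~\ref{correspondence between special and critical points} to identify $Eu^{*}_{f,X\cap f_2^{-1}(\delta)}(x_0)=n_{reg}$ and $Eu^{*}_{\tilde{f},X\cap f_2^{-1}(\delta)}(x_0)=\tilde{n_{reg}}$, then substitute into~(\ref{cor 3.12}). Your added discussion of why the tractability and isolated-special-point hypotheses persist for $\tilde{f_1}$ when $N\gg 1$ is a welcome elaboration that the paper leaves implicit.
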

\noindent\textbf{Proof.}
Using Lemma \ref{correspondence between special and critical points}, $Eu^{*}_{\tilde{f},X\cap f_2^{-1}(\delta)}(x_0)=\tilde{n_{reg}}$ and $Eu^{*}_{f,X\cap f_2^{-1}(\delta)}(x_0)=n_{reg}.$ Applying these equalities to Equation \ref{cor 3.12}, we obtain the  equality claimed.\qed

\vspace{0,2cm}

We can also compare the Euler obstruction of the map $g=(f_2,f_1)$ with the Euler obstruction of a perturbation $\tilde{g}=(f_2, \tilde{f_1}).$ 

\begin{corollary}
	For a sufficiently large integer number $N>>1,$ if $f_1$ is tractable at the origin with respect to $\mathcal{V}$ and  $x_0\in X\cap f_2^{-1}(\delta),$ then $$Eu^{*}_{\tilde{g},X\cap \tilde{f_1}^{-1}(\alpha)}(x_0)=Eu^{*}_{g, X\cap f_1^{-1}(\alpha)}(x_0)+N(-1)^{d-1}\sum_{i=1}^rm_{f_2,b_j}Eu_{f_2,X\cap \tilde{f_1}^{-1}(\alpha)}(b_j),$$ where $ 0<|\alpha|\ll\varepsilon\ll1.$
\end{corollary}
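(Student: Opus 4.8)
The plan is to mirror exactly the proof of the previous corollary, simply swapping the roles of the two coordinate functions and invoking the companion formula \eqref{lemma 4.3} in place of \eqref{cor 3.12}. First I would observe that the hypothesis ``$f_1$ tractable at the origin with respect to $\mathcal{V}$'' (together with the standing assumption that $f_2$ has an isolated singularity at the origin, inherited from the subsection) is precisely the set-up under which \eqref{lemma 4.3} was established in \cite{santana2019local}: it compares the number $m_{reg}$ of Morse critical points of a stratified Morsefication of $f_2|_{X\cap f_1^{-1}(\alpha)}$ on $X_{reg}\cap f_1^{-1}(\alpha)\cap B_{\varepsilon}$ with the number $\tilde{m}_{reg}$ of such points for $f_2|_{X\cap\tilde{f_1}^{-1}(\alpha')}$ on $X_{reg}\cap\tilde{f_1}^{-1}(\alpha')\cap B_{\varepsilon}$, where $0<|\alpha|,|\alpha'|\ll|\delta|\ll\varepsilon\ll1$.

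Next I would apply Lemma~\ref{correspondence between special and critical points}, but with the two coordinates interchanged: for the map $g=(f_2,f_1)$ the ``first'' coordinate playing the role of the Morsefied function is $f_2$, and the ``second'' coordinate cutting out the slice is $f_1$. Tractability of $f_2$ with respect to the good stratification of $X$ relative to $f_1$ is what the lemma formally requires; here it holds because $f_2$ has an isolated stratified singularity at the origin (so it is prepolar, hence tractable, with respect to any good stratification), and a good stratification of $X$ relative to $f_1$ exists by \cite{Ms1}. Thus Lemma~\ref{correspondence between special and critical points} gives $Eu^{*}_{g,X\cap f_1^{-1}(\alpha)}(x_0)=m_{reg}$ and, for the perturbed map $\tilde{g}=(f_2,\tilde{f_1})$, $Eu^{*}_{\tilde{g},X\cap\tilde{f_1}^{-1}(\alpha)}(x_0)=\tilde{m}_{reg}$, for $x_0$ in the appropriate slice and $0<|\alpha|\ll\varepsilon\ll1$.

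Substituting these two identifications into \eqref{lemma 4.3} yields at once
\[
Eu^{*}_{\tilde{g},X\cap\tilde{f_1}^{-1}(\alpha)}(x_0)=Eu^{*}_{g,X\cap f_1^{-1}(\alpha)}(x_0)+N(-1)^{d-1}\sum_{i=1}^{r}m_{f_2,b_j}Eu_{f_2,X\cap\tilde{f_1}^{-1}(\alpha)}(b_j),
\]
which is the claimed equality. The only point that needs a line of care is matching the parameters: \eqref{lemma 4.3} is stated for $0<|\alpha|,|\alpha'|\ll|\delta|\ll\varepsilon\ll1$ with $\alpha$ a regular value of $f_1$ and $\alpha'$ a regular value of $\tilde{f_1}$, so in the statement of the corollary one reads $\alpha$ for $\alpha'$ (equivalently, $\tilde{f_1}^{-1}(\alpha)$ is shorthand for $\tilde{f_1}^{-1}(\alpha')$), and $N\gg1$ is chosen large enough for \eqref{lemma 4.3} to hold. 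I expect the main (indeed only) obstacle to be verifying that the hypotheses of Lemma~\ref{correspondence between special and critical points} are genuinely met after interchanging the coordinates — namely that $f_2$, having an isolated singularity, is tractable relative to $f_1$ — since everything else is a direct substitution into the already-established formula \eqref{lemma 4.3}.
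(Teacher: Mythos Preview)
Your proposal is correct and follows essentially the same approach as the paper: apply Lemma~\ref{correspondence between special and critical points} (with the roles of the coordinates swapped) to identify $Eu^{*}_{g,X\cap f_1^{-1}(\alpha)}(x_0)=m_{reg}$ and $Eu^{*}_{\tilde g,X\cap \tilde f_1^{-1}(\alpha')}(x_0)=\tilde m_{reg}$, then substitute into \eqref{lemma 4.3}. Your additional care in checking that $f_2$ is tractable relative to $f_1$ (via its isolated singularity) and in matching the parameters $\alpha,\alpha'$ goes beyond what the paper makes explicit, but the argument is the same.
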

\noindent\textbf{Proof.}
By Lemma \ref{correspondence between special and critical points}, $Eu^{*}_{\tilde{g},X\cap \tilde{f_1}^{-1}(\alpha^{\prime})}(x_0)=\tilde{m_{reg}}$ and $Eu^{*}_{g,X\cap {f_1}^{-1}(\alpha)}(x_0)=m_{reg}.$ Applying these equalities to Equation \ref{lemma 4.3}, we obtain the equality claimed.\qed

\subsection{Euler obstruction of a map, Chern obstruction and number of cusps}

Applying results of \cite{BOT}, we present in this section an application that generalizes Proposition 3.2 of \cite{brasselet2010euler}, computing the number of cusps of a stabilization $f_{s}:X_{s}\to \mathbb{C}^{2}$. We also present the relation between the multiplicity of the singular set of the application and the Euler obstruction of a map as follows.

\begin{proposition} Let $f:(X,0)\rightarrow(\mathbb{C}^2,0), f(x)=(f_1(x),f_2(x)),$ be a holomorphic map-germ. Consider the collection $\{\omega^{(1)},\omega^{(2)}\}$ of $1$-forms on $\mathbb{C}^n$, 
where $\{\omega^{(1)}\}=\{df_1,df_2\}$ and $\{\omega^{(2)}\}=\{l_1,\ldots, l_{d}\}$ are generic. Assume that the collection $\{\omega^{(1)},\omega^{(2)}\}$ of $1$-forms has an isolated special point at the origin. Then, $$Eu^{*}_{f,X}(0)=\# \ \Sigma(\tilde{f})\cap H,$$ where $\Sigma(\tilde{f})$ is the singular set of a generic perturbation of $f$ and  $H$ is a $\ell$-plane in $\mathbb{C}^n$ in general position with $\Sigma(\tilde{f})$ and $\ell$ is the codimension of $\Sigma(\tilde{f})$.
\end{proposition}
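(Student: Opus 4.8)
The strategy is to reduce the statement about the Euler obstruction of the map to the already-established interpretation of the Chern obstruction as a count of special points of a generic perturbation, and then to identify that count with the number of points in which the singular set of a generic perturbation $\tilde f$ meets a generic linear plane of complementary dimension. First I would recall that, by the definition of $Eu^{*}_{f,X}(0)$ and the result of \cite{brasselet2010euler}, we have $Eu^{*}_{f,X}(0)=\mathrm{Ch}_{X,0}\{\omega_j^{(i)}\}$, and that the Chern obstruction is computed as the number of special points of a generic deformation $\{\omega_j^{(i)}+t\,l_j^{(i)}\}$ of the collection lying in a small punctured neighbourhood of the origin. The key observation — exactly as in the proof of Lemma \ref{correspondence between special and critical points} — is that a point $p\in X_{reg}$ is a special point of the perturbed collection precisely when the perturbed differentials $df_1+t\,l_1^{(1)},\,df_2+t\,l_2^{(1)}$ become linearly dependent on $T_pX_{reg}$, i.e.\ precisely when $p$ lies on the singular set $\Sigma(\tilde f)$ of the corresponding perturbation $\tilde f$ of $f$ restricted to $X_{reg}$. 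Thus $Eu^{*}_{f,X}(0)$ equals the number of points of $\Sigma(\tilde f)$ that arise in a small ball around $0$ after a generic perturbation.

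\textbf{Key steps.} (i) Translate $Eu^{*}_{f,X}(0)$ into the count of special points of a generic perturbation of the collection $\{\omega^{(1)},\omega^{(2)}\}$ (invoking \cite{Ebeling2007a} for the generic independence of the count, and \cite{brasselet2010euler} for the identification with $Eu^{*}$). (ii) Show that the locus of the subcollection $\{df_1,df_2\}$ on $X_{reg}$ is exactly $\Sigma(f|_{X_{reg}})$, so the perturbed locus is $\Sigma(\tilde f)$; here I would use the characterization of a singular point of a map to $\mathbb{C}^2$ as a point where the two coordinate differentials are dependent on the tangent space. (iii) Invoke the results of \cite{BOT} to control $\Sigma(\tilde f)$: after a generic perturbation, $\Sigma(\tilde f)$ is a reduced complex analytic set of pure codimension $\ell$ near $0$, and the remaining special points of the full collection $\{\omega^{(1)},\omega^{(2)}\}$ are those of $\Sigma(\tilde f)$ cut down by the generic linear forms $\{l_1,\dots,l_d\}$. (iv) Observe that cutting $\Sigma(\tilde f)$ by the $d$ generic linear forms $\omega^{(2)}=\{l_1,\dots,l_d\}$ amounts, generically, to intersecting $\Sigma(\tilde f)$ with a linear plane $H$ of dimension $\ell=\mathrm{codim}\,\Sigma(\tilde f)$ — more precisely, the subcollection $\omega^{(2)}$ being dependent on the limiting tangent plane forces $p$ to lie on such an $H$ in general position — and by the genericity this intersection is transverse and consists of $\#\,\Sigma(\tilde f)\cap H$ reduced points, each contributing $+1$ to the Chern obstruction.

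\textbf{Main obstacle.} The delicate point is step (iv): making precise why the Chern obstruction of the full collection of collections $\{\omega^{(1)},\omega^{(2)}\}$ — whose obstruction dimension is $d$, split as $k_1=d-1$ from $\omega^{(1)}$ (a collection of $2$ forms, so $d-k_1+1=2$) plus $k_2=1$ from each of the $d$ linear forms... one must be careful here that $\omega^{(2)}=\{l_1,\dots,l_d\}$ is genuinely a family of $d$ subcollections each of a single form, contributing $k_i=1$ each — equals the number of points where $\Sigma(\tilde f)$, which is cut out by the dependence condition on $\omega^{(1)}$ and has codimension $\ell$, meets the common zero plane $H=\{l_1=\dots=l_{\ell}=0\}$ of $\ell$ of these generic linear forms. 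One needs that, for $\omega^{(2)}$ generic, the special points of the full collection lying on $X_{reg}$ near $0$ are in bijection with $\Sigma(\tilde f)\cap H$ and each is counted with multiplicity one; this requires that $\Sigma(\tilde f)$ be generically reduced near $0$ and that $H$ meet it transversally, which is where the hypotheses of \cite{BOT} on the stabilization and the general-position choice of $H$ enter. Once this transversality-and-multiplicity bookkeeping is settled, summing the local indices gives exactly $\#\,\Sigma(\tilde f)\cap H$, completing the proof.
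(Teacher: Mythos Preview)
Your overall strategy matches the paper's: pass from $Eu^{*}_{f,X}(0)$ to the Chern obstruction via \cite{brasselet2010euler}, compute the latter as the number of special points of a generic perturbation on $X_{reg}$ (Corollary~1 and Proposition~2 of \cite{Ebeling2007a}), and then identify those special points as $\mathrm{locus}\{d\tilde f_1,d\tilde f_2\}\cap\mathrm{locus}\{l_1,\dots,l_d\}=\Sigma(\tilde f|_{X_{reg}})\cap H=\Sigma(\tilde f)\cap H$. The paper carries this out in a single chain of equalities and does \emph{not} invoke \cite{BOT} at any point here; your step~(iii) is an unnecessary detour, since the reducedness and dimension of $\Sigma(\tilde f)$ come for free from the genericity of the perturbation and the results of \cite{Ebeling2007a}.

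Your ``main obstacle'' paragraph contains a real confusion that you should fix. In the paper's setup there are exactly $s=2$ subcollections: $\omega^{(1)}=\{df_1,df_2\}$ with $d-k_1+1=2$, so $k_1=d-1$, and $\omega^{(2)}=\{l_1,\dots,l_d\}$ is a \emph{single} subcollection of $d$ forms with $d-k_2+1=d$, so $k_2=1$; your proposed reading of $\omega^{(2)}$ as $d$ separate one-form subcollections would not satisfy $\sum k_i=d$. Consequently the locus of $\omega^{(2)}$ is \emph{not} the zero set $\{l_1=\dots=l_\ell=0\}$ but the set of points where $l_1,\dots,l_d$ become linearly dependent on a limit tangent $d$-plane; the paper identifies the intersection of this locus with $\Sigma(\tilde f)$ directly with $\Sigma(\tilde f)\cap H$ for a generic $\ell$-plane $H$, appealing to the general-position machinery of \cite{Ebeling2007a} rather than to the explicit transversality-and-multiplicity argument you sketch.
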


\begin{proof} By Corollary 1 and Proposition 2 of \cite{Ebeling2007a} and by the relation between the Chern number and the Euler obstruction of a map \cite{brasselet2010euler}, we have
\begin{equation*}
  \begin{array}{ccl}
   Eu^{*}_{f,X}(0)  &=&  Ch_{X,0}\{\omega_j^{(i)}\}  \\
      &=&  \# \{\text{special points of } \{\tilde{\omega}_j^{(i)}\} \text{ on } X_{reg}\}  \\
       & = & \# \ \text{locus}\{d\tilde{f_1}, d\tilde{f_2}\}\cap\text{locus}\{l_1,\ldots, l_{d}\}\\
       & = & \Sigma(\tilde{f}|_{X_{reg}})\cap H\\
        & = & \Sigma(\tilde{f})\cap H.\\
  \end{array}  
\end{equation*}
\end{proof}

In the next result, we consider $Ind_{X,0}\{\omega_j^{(i)}\}$ the GSV-index for collections of 1-forms defined by Ebeling and Gusein-Zade \cite{EGZLondon}, which is a generalization of the GSV-index defined in \cite{GSV}. 

\begin{definition}
Let $f:(X,0)\rightarrow(\mathbb{C}^2,0)$ be an $A$-finite holomorphic map-germ, where $(X,0)\subset(\mathbb{C}^n,0)$ is a 2-dimensional isolated complete intersection singularity (ICIS). Given $\mathcal{F}:(\mathcal{X},0)\rightarrow(\mathbb{C}\times\mathbb{C}^2,0)$, $\mathcal{F}(s,x)=(s,f_s(x))$, a stabilisation of $f$, we consider a small representative $\mathcal{F}:\mathcal{X}\rightarrow D\times B_{\varepsilon}$, where $D$ is an open neighbourhood of $0$ in $\mathbb{C}$ and $B_{\varepsilon}$ is a ball of radius $\varepsilon$ in $\mathbb{C}^2$. We define by $c(f_s)$ the number of cusps of $f_s:X_s\rightarrow B_{\varepsilon}$, for $s\in D\setminus\{0\}$.
\end{definition}

In \cite{BOT} the authors show that the number of cusps $c(f_s)$ is independent of $s\in D\setminus\{0\}$ and of the stabilisation $\mathcal{F}$.

\begin{proposition}
Let $f:(X,0)\rightarrow(\mathbb{C}^2,0), f(x)=(f_1(x),f_2(x))$, be an $A$-finite holomorphic map-germ, where $(X,0)\subset(\mathbb{C}^n,0)$ is a 2-dimensional ICIS. Consider the collection $\{\omega_j^{(i)}\}=\{\omega^{(1)}, \omega^{(2)}\}$ of 1-forms on $(\mathbb{C}^n,0)$, with an isolated special point at the origin, given by $\omega^{(1)}=\{df_1,df_2\}$ and $\{\omega^{(2)}\}=\{df_1,d\Delta\}$, where $\Delta$ is the determinant of the jacobian matrix of $f$. Then $$Ch_{X,0}\{\omega_j^{(i)}\}=c(f_s)-Ind_{X,0}\{\ell_j^{(i)}\},$$ where $c(f_s)$ is the number of cusps of a stable deformation $f_s$ in a stabilisation of $f$ and $\{\ell_j^{(i)}\}$ is a generic collection  of linear functions on $\mathbb{C}^n$. 
\end{proposition}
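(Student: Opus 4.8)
The plan is to reduce the statement to the additivity of the local Chern obstruction under a generic hyperplane-type deformation, combined with the definition of the GSV-index of a collection of $1$-forms. First I would recall the setup: $(X,0)$ is a $2$-dimensional ICIS in $(\mathbb{C}^n,0)$, $f$ is $\mathcal{A}$-finite, and the collection $\{\omega^{(1)},\omega^{(2)}\}=\{\{df_1,df_2\},\{df_1,d\Delta\}\}$ has an isolated special point at the origin. The key observation is that the locus of $\{df_1,d\Delta\}$ on $X_{\mathrm{reg}}$ detects, after a generic perturbation, exactly the cusp points of the perturbed map: a point where $df_1$ and $d\Delta$ become linearly dependent on the tangent plane is, generically, a point where the ramification curve $\Sigma(f_s)$ is tangent to the fibres of $f_1$, i.e. a cusp of $f_s|_{X_s}$. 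This is precisely the geometric content underlying Proposition 3.2 of \cite{brasselet2010euler}, and I would invoke \cite{BOT} to know that the cusp count $c(f_s)$ is well-defined and independent of the stabilisation.

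The main step is then a relation of the form
\begin{equation*}
\mathrm{Ch}_{X,0}\{\omega_j^{(i)}\}+\mathrm{Ind}_{X,0}\{\ell_j^{(i)}\}=\#\bigl(\text{special points of a generic perturbation on a Milnor fibre of }X\bigr),
\end{equation*}
which is the analogue, for collections of $1$-forms, of the classical formula relating the Euler obstruction, the GSV-index and a global index on a smoothing. Concretely, I would take a stabilisation $\mathcal{F}(s,x)=(s,f_s(x))$ with small representative $\mathcal{F}:\mathcal{X}\to D\times B_\varepsilon$, fix $s\neq 0$ so that $X_s$ is smooth and $f_s$ is stable, and perturb the collection $\{\omega_j^{(i)}\}$ generically. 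By the Ebeling--Gusein-Zade index theory (the proportionality/law-of-conservation-of-number statements in \cite{Ebeling2007a,EGZLondon}), the total number of special points of the perturbed collection on $X_s\cap B_\varepsilon$ splits as the number lying near the origin on $X$ itself, which is $\mathrm{Ch}_{X,0}\{\omega_j^{(i)}\}$ plus the correction from the singularity of $X$ — this correction being exactly $\mathrm{Ind}_{X,0}\{\ell_j^{(i)}\}$ for a generic collection of linear forms, by the definition of the GSV-index of a collection. On the other hand, by the previous Proposition (identifying $\mathrm{Ch}$ with a transverse count of the locus) together with the cusp interpretation above, the total count on the smoothing equals $c(f_s)$. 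Equating the two expressions gives $\mathrm{Ch}_{X,0}\{\omega_j^{(i)}\}=c(f_s)-\mathrm{Ind}_{X,0}\{\ell_j^{(i)}\}$.

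The hard part will be justifying the identification of the perturbed special points of $\{df_1,d\Delta\}$ with genuine cusp points of $f_s$, and making precise the "conservation of number" that distributes these points between the local contribution at $0$ and the index contribution coming from the singular geometry of $X$; one must check that the genericity of the linear collection $\omega^{(2)}$ used in the Euler-obstruction side and the genericity needed for $\{df_1,d\Delta\}$ to have an isolated special point are compatible, and that no special points escape to the boundary. I would handle this by appealing to the stratified Morse-theoretic transversality arguments already used in \cite{brasselet2010euler,BOT} and to the additivity of obstruction classes under subdivision of the smoothing into a neighbourhood of $0$ and its complement, rather than re-deriving them. Once the bookkeeping of special points is set up correctly, the statement follows by comparing the two counts.
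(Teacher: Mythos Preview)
Your proposal is correct and follows essentially the same route as the paper: both arguments hinge on the Ebeling--Gusein-Zade identity $\mathrm{Ch}_{X,0}\{\omega_j^{(i)}\}=\mathrm{Ind}_{X,0}\{\omega_j^{(i)}\}-\mathrm{Ind}_{X,0}\{\ell_j^{(i)}\}$ (Corollary~3 of \cite{Ebeling2007a}) together with the identification, via Proposition~3.2 of \cite{brasselet2010euler}, of the special points on the smoothing $X_s$ with the cusps of the stable $f_s$. The paper is simply more economical, invoking that corollary directly rather than your ``conservation of number'' description; note also that this identity is a theorem about the difference $\mathrm{Ind}-\mathrm{Ch}$ being independent of the collection, not a geometric splitting of the smoothing into a neighbourhood of $0$ and its complement as you phrase it.
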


\begin{proof} Let us consider a stabilisation of $f$ as in Definition 3.2 of \cite{BOT}. We denote by $X_s$ a smoothing of $(X,0)$ and by $f_s:X_s\rightarrow \mathbb{C}^2$ a small representative of a stable map in the stabilisation. By Corollary 3 of \cite{Ebeling2007a}, for a generic collection $\{\ell_j^{(i)}\}$ of linear functions on $\mathbb{C}^n$, we have that $$Ch_{X,0}\{\omega_j^{(i)}\}= Ind_{X,0}\{\omega_j^{(i)}\}-Ind_{X,0}\{\ell_j^{(i)}\}.$$ Following \cite{EGZLondon}, $Ind_{X,0}\{\omega_j^{(i)}\}$ is equal to the sum of the indices of the singular points $q_1.\ldots, q_l$ of $\{\omega_j^{(i)}\}$ on the smoothing $X_s$ of $X$ in a neighbourhood of the origin. Since the local Chern obstruction coincides with the index on a smooth manifold and $f_s:X_s\rightarrow \mathbb{C}^2$ is stable, thus finitely determined, it follows from \cite[ Proposition 3.2]{brasselet2010euler} that 
\begin{equation*}
  \begin{array}{ccl}
     Ind_{X,0}\{\omega_j^{(i)}\}  &=& \displaystyle\sum_{i=1}^{\l}Ind_{X_s,q_i}\{{\omega}_j^{(i)}\} \\
     &=&  \displaystyle\sum_{i=1}^{\l}Ch_{X_s,q_i}\{{\omega}_j^{(i)}\} \\
        & = & \displaystyle\sum_{i=1}^{\l}c_i({f_s})\\
        & = & c(f_s),
  \end{array}  
\end{equation*} where $c_i$ is the number os cusps of $f_s$ close to $q_i$. 
\end{proof}

\bibliography{GRS} 

\begin{thebibliography}{10}

\bibitem{brasselet2010euler}
J.-P. Brasselet, N.~G. Grulha~Jr, and M.~A. Ruas.
\newblock The {E}uler obstruction and the {C}hern obstruction.
\newblock {\em Bulletin of the London Mathematical Society}, 42(6):1035--1043,
  2010.

\bibitem{BLS}
J.-P. Brasselet, D.~T. L{\^e}, and J.~Seade.
\newblock Euler obstruction and indices of vector fields.
\newblock {\em Topology}, 39(6):1193 -- 1208, 2000.

\bibitem{BMPS}
J.-P. Brasselet, D.~Massey, A.~Parameswaran, and J.~Seade.
\newblock Euler obstruction and defects of functions on singular varieties.
\newblock {\em Journal of the London Mathematical Society}, 70(1):59--76, 2004.

\bibitem{brasselet1981classes}
J.-P. Brasselet and M.-H. Schwartz.
\newblock Sur les classes de {C}hern d'un ensemble analytique complexe.
\newblock {\em Ast{\'e}risque}, 82:93--147, 1981.

\bibitem{milnor}
G.~Chudnovsky.
\newblock Singular points on complex hypersurfaces and multidimensional
  {S}chwarz lemma.
\newblock {\em S{\'e}minaire de Th{\'e}orie des Nombres, Paris}, 80, 1979.

\bibitem{DG}
N.~Dutertre and N.~G. Grulha~Jr.
\newblock L{\^e}--{G}reuel type formula for the {E}uler obstruction and
  applications.
\newblock {\em Advances in Mathematics}, 251:127--146, 2014.

\bibitem{DutertreGrulhaJofSing}
N.~Dutertre and N.~G. Grulha~Jr.
\newblock Some notes on the {E}uler obstruction of a function.
\newblock {\em J. Singul.}, 10:82--91, 2014.

\bibitem{EGZLondon}
W.~Ebeling and S.~Gusein-Zade.
\newblock Indices of vector fields or 1-forms and characteristic numbers.
\newblock {\em Bulletin of the London Mathematical Society}, 37:747--754, 2005.

\bibitem{ebeling2005radial}
W.~Ebeling and S.~Gusein-Zade.
\newblock Radial index and {E}uler obstruction of a 1-form on a singular
  variety.
\newblock {\em Geometriae Dedicata}, 113(1):231--241, 2005.

\bibitem{Ebeling2007a}
W.~Ebeling and S.~M. Gusein-Zade.
\newblock Chern obstructions for collections of 1-forms on singular varieties.
\newblock pages 557--564, 2007.

\bibitem{GaffneyGrulhaJofSing}
T.~Gaffney and N.~G. Grulha~Jr.
\newblock The multiplicity polar theorem, collections of 1-forms and chern
  numbers.
\newblock {\em J. Singul.}, 7:82--91, 2013.

\bibitem{GSV}
X.~G\'{o}mez-Mont, J.~Seade, and A.~Verjovsky.
\newblock The index of a holomorphic flow with an isolated singularity.
\newblock {\em Math. Ann.}, 291:737--751, 1991.

\bibitem{Grulha2008}
N.~G. Grulha~Jr.
\newblock L'obstruction d'{E}uler locale d'une application.
\newblock {\em Annales de la Facult\'e des sciences de Toulouse :
  Math\'ematiques}, 17(1):53--71, 2008.

\bibitem{Hamm}
H.~Hamm.
\newblock Lokale topologische {E}igenschaften komplexer r{\"a}ume.
\newblock {\em Mathematische Annalen}, 191(3):235--252, 1971.

\bibitem{iomdin1974complex}
I.~Iomdin.
\newblock Complex surfaces with a one-dimensional set of singularities.
\newblock {\em Siberian Mathematical Journal}, 15(5):748--762, 1974.

\bibitem{le1980ensembles}
D.~T. L{\^e}.
\newblock Ensembles analytiques complexes avec lieu singulier de dimension un
  (d’après {I}omdine).
\newblock {\em Seminaire sur les Singularit{\'e}s, Publications
  Matematiqu{\'e}s de l’Universit{\'e} Par{\i}s VII}, pages 87--95, 1980.

\bibitem{MacPherson}
R.~D. MacPherson.
\newblock Chern classes for singular algebraic varieties.
\newblock {\em Annals of Mathematics}, pages 423--432, 1974.

\bibitem{Ms2}
D.~B. Massey.
\newblock {\em Numerical control over complex analytic singularities}.
\newblock American Mathematical Soc.

\bibitem{Ms1}
D.~B. Massey.
\newblock Hypercohomology of {M}ilnor fibres.
\newblock {\em Topology}, 35(4):969--1003, 1996.

\bibitem{BOT}
J.~Nu{\~n}o-Ballesteros, B.~Oréfice-Okamoto, and J.~Tomazella.
\newblock Equisingularity of map germs from a surface to the plane.
\newblock {\em Collectanea Mathematica}, 69(1):65--81, 2018.

\bibitem{NOT}
J.~J. Nu{\~n}o-Ballesteros, B.~Orefice-Okamoto, and J.~N. Tomazella.
\newblock The vanishing {E}uler characteristic of an isolated determinantal
  singularity.
\newblock {\em Israel Journal of Mathematics}, 197(1):475--495, 2013.

\bibitem{ruas2014codimension}
M.~A.~S. Ruas and M.~S. Pereira.
\newblock Codimension two determinantal varieties with isolated singularities.
\newblock {\em Mathematica Scandinavica}, pages 161--172, 2014.

\bibitem{santana2019local}
H.~Santana.
\newblock Local topology of a deformation of a function-germ with a
  one-dimensional critical set, 2019.

\bibitem{santana2019brasselet}
H.~Santana.
\newblock Brasselet number and function-germs with a one-dimensional critical
  set.
\newblock {\em Bull Braz Math Soc, New Series}, 51:301--331, 2020.

\bibitem{STV}
J.~Seade, M.~Tib{\u{a}}r, and A.~Verjovsky.
\newblock Milnor numbers and {E}uler obstruction.
\newblock {\em Bulletin of the Brazilian Mathematical Society}, 36(2):275--283,
  2005.

\bibitem{Teissier1974}
B.~Teissier.
\newblock Cycles évanescents et conditions de {W}hitney.
\newblock {\em C. R. Acad. Sc. Paris}, 276(5):1051–1054, 1973.

\end{thebibliography}
\bibliographystyle{abbrv}

 \end{document}